\theoremstyle{plain}
\newtheorem{theorem}{Theorem}[section]
\newtheorem{lemma}[theorem]{Lemma}
\newtheorem{proposition}[theorem]{Proposition}
\theoremstyle{definition}
\newtheorem{question}[theorem]{Question}
\theoremstyle{remark}
\newtheorem{case}{Case}
\newcommand{\cH}{\mathcal{H}}
\newcommand{\cI}{\mathcal{I}}
\newcommand{\I}{\cI}
\newcommand{\cJ}{\mathcal{J}}
\newcommand{\J}{\cJ}
\newcommand{\cP}{\mathcal{P}}
\newcommand{\cR}{\mathcal{R}}
\newcommand{\cU}{\mathcal{U}}
\newcommand{\cW}{\mathcal{W}}
\newcommand{\fin}{\mathrm{Fin}}
\newcommand{\Fin}{\fin}
\newcommand{\Hindman}{\mathcal{H}}
\newcommand{\Ramsey}{\mathcal{R}}
\newcommand{\vdW}{\mathcal{W}}
\DeclareMathOperator{\FS}{FS}
\begin{document}


\title[Kat\v{e}tov order between some Ramsey-like  ideals]{Kat\v{e}tov order between Hindman, Ramsey, van der Waerden and summable ideals}


\author{Rafa\l{} Filip\'{o}w}
\address[Rafa\l{}~Filip\'{o}w]{Institute of Mathematics\\ Faculty of Mathematics, Physics and Informatics\\ University of Gda\'{n}sk\\ ul.~Wita Stwosza 57\\ 80-308 Gda\'{n}sk\\ Poland}
\email{Rafal.Filipow@ug.edu.pl}
\urladdr{\url{http://mat.ug.edu.pl/~rfilipow}}

\author{Krzysztof Kowitz}
\address[Krzysztof Kowitz]{Institute of Mathematics\\ Faculty of Mathematics\\ Physics and Informatics\\ University of Gda\'{n}sk\\ ul.~Wita  Stwosza 57\\ 80-308 Gda\'{n}sk\\ Poland}
\email{Krzysztof.Kowitz@phdstud.ug.edu.pl}

\author{Adam Kwela}
\address[Adam Kwela]{Institute of Mathematics\\ Faculty of Mathematics\\ Physics and Informatics\\ University of Gda\'{n}sk\\ ul.~Wita  Stwosza 57\\ 80-308 Gda\'{n}sk\\ Poland}
\email{Adam.Kwela@ug.edu.pl}
\urladdr{\url{https://mat.ug.edu.pl/~akwela}}


\date{\today}


\subjclass[2020]{Primary: 
03E05, 
05D10. 
}


\keywords{Katetov order, ideal, filter, Ramsey’s theorem for coloring graphs, Hindman’s finite sums theorem, van der Waerden’s arithmetical progressions theorem}


\begin{abstract}
A family $\I$ of subsets of a set $X$ is an \emph{ideal on $X$} if it is closed under taking subsets and finite unions of its elements.
An ideal $\I$ on $X$ is below an ideal $\J$ on $Y$ in the \emph{Kat\v{e}tov order}  if there is a function $f:Y\to X$ such that $f^{-1}[A]\in\J$ for every $A\in \I$.
We show that the Hindman ideal, the Ramsey ideal and the summable ideal are pairwise incomparable in the Kat\v{e}tov order, where 
\begin{itemize}
    \item 
the \emph{Ramsey ideal} consists of those sets of pairs of natural numbers which do not contain 
a set of all pairs of any infinite set (equivalently do not contain, in a sense, any infinite complete subgraph), 
\item the \emph{Hindman ideal} consists  of those 
sets of natural numbers which do not contain any infinite set together with all finite  sums of its members (equivalently do not contain IP-sets that are considered in Ergodic Ramsey theory),
\item the \emph{summable ideal} consists of those sets of natural numbers such that the series of the reciprocals of its members is convergent.
\end{itemize}
Moreover, we show that in the Kat\v{e}tov order the above mentioned ideals are not below  the \emph{van der Waerden ideal} that  consists of those sets of natural numbers which do not contain arithmetic progressions of arbitrary finite length.
\end{abstract}


\maketitle




\section{Introduction}

The Kat\v{e}tov order is an efficient tool for studying ideals over countable sets \cite{MR2777744,MR3696069,MR2017358,MR3019575,MR3555332,MR3778962}.
Originally, the Kat\v{e}tov order (introduced by Kat\v{e}tov \cite{MR250257} in 1968) was  used  to study convergence in topological spaces, and our interest in Kat\v{e}tov order between the Hindman, Ramsey, van der Waerden and summable ideals stems from the study of sequentially compact spaces defined as, in a sense, topological counterparts of well-known combinatorial theorems: 
Ramsey’s theorem for coloring graphs, 
Hindman’s finite sums theorem  and 
van der Waerden’s arithmetical progressions theorem \cite{BergelsonZelada,MR2948679,MR1887003,MR1866012,MR1950294,MR4552506}. 
It is known \cite{naszUnified,MR4584767} that an existence of a sequentially compact space which distinguishes the above mentioned classes of spaces is reducible to a question whether particular ideals  are incomaparable in the Kat\v{e}tov order.

Beside our primary interest in the Kat\v{e}tov order described above we mention  one more strength of this order.
Using the Kat\v{e}tov order,
we  can classify non-definable  objects (like ultrafilters or maximal almost disjoint families)
using Borel ideals \cite{MR3696069}. For instance, an ultrafilter $\cU$ is a P-point if and only if the dual ideal $\cU^*$  is not Kat\v{e}tov above $\Fin^2$ (equivalently $\cU$ is a $\Fin^2$-ultrafilter as defined by  Baumgartner \cite{MR1335140}). 
It is known \cite{MR4448270} that 
an existence of an ultrafilter which distinguishes between some classes of ultrafilters is reducible to a question whether particular ideals are incomaparable in the Kat\v{e}tov order.

Below we describe the results obtain in this paper and introduce a necessary notions and notations.

We write $\omega$ to denote  the set of all natural numbers (with zero).

We write  
$[A]^2$ to denote the set of all unordered pairs of elements of $A$, 
$[A]^{<\omega}$ to denote the family of all finite subsets of $A$ and
$[A]^\omega$ to denote the family of all infinite countable subsets of $A$.

A family $\I\subseteq\cP(X)$ of subsets of a set $X$ is an \emph{ideal on $X$} if it is closed under taking subsets and finite unions of its elements, $X\notin\I$ and $\I$ contains all finite subsets of $X$.
 By $\fin(X)$ we denote the family of all finite subsets of  $X$ and we  write $\Fin$ instead of $\Fin(\omega)$. 

For an ideal $\I$ on  $X$, we write $\I^+=\{A\subseteq X: A\notin\I\}$ and call it the \emph{coideal of $\I$}, and we write $\I^*=\{X\setminus A: A\in\I\}$ and call it the \emph{filter dual to $\I$}.
It is easy to see that  
$\I\restriction A=\{A\cap B:B\in \I\}$
is an ideal on $A$ if and only if  $A\in \I^+$. 

For a set $B\subseteq\omega$, we write 
$FS(B)$ to denote the set of all finite (nonempty) sums of distinct elements of $B$
i.e.~$\FS(B)=\{\sum_{n\in F}n: F\in[B]^{<\omega}\setminus\{\emptyset\}\}$.

In this paper we are interested in the following four ideals: 
\begin{itemize}
    \item the \emph{Ramsey ideal} 
$$\Ramsey=\left\{A\subseteq[\omega]^2: \forall B\in[\omega]^\omega\, ([B]^2\not\subseteq A)\right\},$$
    \item the \emph{Hindman ideal} 
$$\Hindman=\left\{A\subseteq\omega: \forall B\in[\omega]^\omega\, \FS(B)\not\subseteq A\right\},$$
    \item the \emph{van der Waerden ideal} 
\begin{equation*}
 \begin{split}
\vdW= \{A\subseteq\omega:  &  \text{ $A$ does not contain arithmetic progressions} 
\\& 
\text{ of arbitrary finite length}\},    
 \end{split}   
\end{equation*}
    \item the \emph{summable ideal}
$$\mathcal{I}_{1/n}=\left\{A\subseteq\omega: \sum_{n\in A}\frac{1}{n+1}<\infty\right\}.$$
\end{itemize}

The Ramsey ideal was introduced by Meza-Alc\'{a}ntara and Hru\v{s}\'{a}k  \cite{MR3019575} (the authors noted that if we identify a set $A\subseteq [\omega]^2$ with a graph $G_A=(\omega,A)$, the ideal $\Ramsey$ can be seen as an ideal consisting of graphs without infinite complete subgraphs).
Both the Hindman and van der Waerden ideals were introduced by Fla\v{s}kova \cite[p.~109]{MR2471564}.
The  summable ideal is a particular instance of the so-called \emph{summable ideals} which 
seem to be ``ancient'' compare to previously mentioned ideals as they were introduced in 1972 by Mathias  \cite[Example~3, p.206]{MR0363911}.

We say that an ideal $\I$ on $X$ is below an ideal $\J$ on $Y$ in the \emph{Kat\v{e}tov order} \cite{MR250257}  
 if there is a function $f:Y\to X$ such that $f^{-1}[A]\in\J$ for every $A\in \I$ (equivalently, $f[B]\notin\I$ for all $B\notin\J$).
Note that the Kat\v{e}tov order  has been extensively examined (even in its own right) for many years so far \cite{
MR3034318,
MR1335140,
MR3600759,
MR4378082,
MR4247792,
MR4036731,
MR3513296,
MR3692233,
MR2777744,
MR3696069,
MR2017358,
MR3019575,
alcantara-phd-thesis,
MR3555332,
MR3550610,
MR3778962,
MR4312995}.

The aim of this  paper is to prove  the following 
\begin{theorem}\ 
\label{thm:THEOREM}
\begin{enumerate}
    \item 
The ideals  $\Ramsey$, $\Hindman$ and $\I_{1/n}$ are pairwise incomparable in the Kat\v{e}tov order.
\item The ideals  $\Ramsey$, $\Hindman$ and $\I_{1/n}$ are not below the ideal $\vdW$ in the Kat\v{e}tov order. 
\end{enumerate}    
\end{theorem}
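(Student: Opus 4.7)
Each of the nine claims $\I\not\leq_K\J$ in Theorem~\ref{thm:THEOREM} unfolds, via the coideal reformulation recalled above, into: \emph{for every $f:Y\to X$, produce $B\in\J^{+}$ with $f[B]\in\I$.} My plan is to apply, in each of the nine cases, the combinatorial theorem describing $\J^{+}$---van der Waerden for $\vdW^{+}$, Ramsey (in its canonical form) for $\Ramsey^{+}$, Hindman's finite sums theorem for $\Hindman^{+}$, and elementary additivity (``a finite union of summable sets is summable'') for $\I_{1/n}^{+}$---to a finite partition of $X$ chosen so that as many classes as possible lie in $\I$. When a ``good'' class has a $\J^{+}$ preimage we take $B$ to be that preimage; otherwise the single ``bad'' class is iterated on and the remnants are fused at the end.

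\textbf{Part (2).} For $\I=\Hindman$, use the partition $\omega=3\bZ\cup(1+3\bZ)\cup(2+3\bZ)$: the two nonzero classes are sum-free (the sum of two elements of $i+3\bZ$ has residue $2i\bmod 3\neq i$) and hence lie in $\Hindman$. Given $f$, colour $\omega$ by $f(n)\bmod 3$ and apply van der Waerden. A nonzero residue with vdW-positive preimage gives $B$ directly; otherwise iterate with $f/3$ on $f^{-1}[3\bZ]$, yielding a terminating step or a descending chain of vdW-positive $B_k$ with $f[B_k]\subseteq 3^{k}\bZ$. In the non-terminating case, perform a fusion: inductively pick APs $P_k\subseteq B_k$ of length $k$, with growth and $3$-adic valuation constraints on $f[P_k]$, so that any hypothetical infinite FS-set inside $f[B^{*}]=\bigcup_k f[P_k]$ would have to live inside a single finite block $f[P_k]$---a contradiction. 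Such APs exist because in the non-terminating case, all sufficiently short initial segments of $f$-values on $B_k$ are vdW-small (otherwise the iteration stops with $f[B_k]$ bounded, hence finite, hence already in $\Hindman$). The cases $\I=\I_{1/n}$ and $\I=\Ramsey$ are analogous but simpler: for $\I_{1/n}$, either some fibre is vdW-positive or every $f^{-1}[N,\infty)\in\vdW^{+}$, and APs $P_k\subseteq f^{-1}[k^{3},\infty)$ yield $\sum_{n\in f[B^{*}]}1/(n+1)\leq\sum_k k/k^{3}<\infty$; for $\Ramsey$ (where $f:\omega\to[\omega]^{2}$), run the scheme with $g(n)=\min f(n)$, obtaining either a star image (triangle-free) or pairwise disjoint vertex blocks in $f[B^{*}]$, forbidding any infinite clique.

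\textbf{Part (1) and main obstacle.} The six pairwise non-comparabilities follow the same template. The three directions with $\J=\I_{1/n}$ use the elementary additivity in place of van der Waerden, and the Part (2) constructions adapt verbatim. For $\J=\Ramsey$ apply the Erd\H{o}s--Rado canonical Ramsey theorem to $f:[\omega]^{2}\to X$, reducing to one of the canonical types on some $[C]^{2}$ (constant, $\min$- or $\max$-dependent, or injective), and then sparsify $C$ so that the induced image lies in $\I$: for $\I=\I_{1/n}$ choose $C$ so the relevant $f$-values grow at least quadratically; for $\I=\Hindman$ precompose with a $3$-residue colouring of $X$ to steer the image into $1+3\bZ$. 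For $\J=\Hindman$ apply Hindman's theorem iteratively to threshold colourings $\chi_T(n)=\min\{f(n),T\}$: a monochromatic $\FS$-set at a colour below $T$ exhibits a Hindman-positive fibre (solving the problem immediately), so the monochromatic class lies at colour $T$, yielding $f[\FS(C_T)]\subseteq[T,\infty)$; diagonalising over $T$ produces the required $B$. The main obstacle throughout is the fusion step in every non-terminating branch, most delicately in $\Hindman\not\leq_K\vdW$: combining the countably many $P_k$-blocks (or $\FS$-blocks, or $[C]^{2}$-blocks) into a single $B^{*}\in\J^{+}$ whose image avoids the structures defining $\I$ requires calibrating growth and arithmetic constraints so that the horizontal ($\J^{+}$) and vertical ($\I$) requirements on $B^{*}$ and $f[B^{*}]$ are simultaneously satisfied.
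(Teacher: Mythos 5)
The template is sound whenever the ideal on the right of $\leq_K$ admits \emph{finite} witnesses of positivity: finite arithmetic progressions for $\vdW^{+}$ and finite blocks of large mass for $\I_{1/n}^{+}$, so that the fused $B^{*}=\bigcup_k P_k$ has an image decomposing into finite, well-separated pieces. Your arguments for $\Hindman,\Ramsey\not\leq_K\vdW$ and $\Hindman,\Ramsey\not\leq_K\I_{1/n}$ can be made to work along those lines (and differ from the paper, which proves $\Fin^2\leq_K\Hindman,\Ramsey$ in Lemma~3.1 and then dispatches all four at once because $\vdW$ and $\I_{1/n}$ are $F_\sigma$); your $\I_{1/n}\not\leq_K\vdW$ and $\I_{1/n}\not\leq_K\Ramsey$ arguments are essentially the paper's Sections~4 and~6.

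The remaining three directions have genuine gaps, and they are exactly the hard ones. For $\I_{1/n}\not\leq_K\Hindman$: once you diagonalize the threshold colourings, the block $c_k+\FS(\{c_j:j>k\})$ is \emph{infinite}, so knowing $f\geq T_k$ there gives no bound on $\sum_{y\in f[\text{block}_k]} \frac{1}{y+1}$, which can diverge. The paper instead invokes Taylor's Canonical Hindman Theorem; in the resulting injective case $f^{-1}$ of an initial segment is finite, and, partitioning $\FS(D)$ by the \emph{largest} rather than the smallest index, one gets $2^n$ new sums with $f$-values above $2^{2n}$, so the contribution of the $n$th block sums to at most $2^{-n}$ (Section~5, Case~5) --- nothing in your threshold scheme supplies this cardinality control. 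The same infinite-block issue makes ``diagonalising over $T$'' in $\Ramsey\not\leq_K\Hindman$ nonfunctional: $f$ can paint an infinite complete graph on a single block, and the paper's Section~8 (by far its longest proof) requires a two-case recursion over very sparse sets with no analogue of a one-line diagonal. In $\Hindman\not\leq_K\Ramsey$, the $3$-residue precomposition handles the constant/$\min$/$\max$ canonical types, but in the injective type the iteration yields $T_k$ with $f[[T_k]^2]\subseteq 3^k\Z$ and no \emph{upper} bound on the $3$-adic valuations; after diagonalizing to $H=\{t_n\}$ you know only $f(\{t_i,t_j\})\in 3^{\min(i,j)}\Z$, and since $f(\{t_i,t_j\})$ can lie in an arbitrarily deep $3^m\Z$, $3$-adic separation cannot force $d_1+d_2$ out of $f[[H]^2]$. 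The paper's Section~7 instead works with a very sparse $D$ and the Hindman-smallness of $\{x:\alpha_D(x)\cap\alpha_D(y)\neq\emptyset\}$ (Lemma~7.1), a mechanism with no counterpart in the proposal.
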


As far as we are concerned, the remaining three questions about these ideals are still open.

\begin{question}
\label{q1}
\label{q2}
\label{q3}
Is the ideal $\vdW$ below the ideal $\Ramsey$ ($\Hindman$, $\I_{1/n}$, resp.) in the Kat\v{e}tov order?
\end{question}

Note that in the case of the summable ideal, Question~\ref{q3} is a weakening of the famous Erd\H{o}s-Tur\'{a}n conjecture which says that $\vdW\subseteq\mathcal{I}_{1/n}$.


\section{Preliminaries}

An ideal $\I$ on $X$  is 
\emph{tall} \cite[Definition~0.6]{MR491197} if for every infinite set $A\subseteq X$ there exists an infinite set  $B\subseteq A$ such that $B\in \I$.
It is not difficult to see that $\I$ is not tall $\iff$  $\I\leq_K\J$ for every ideal $\J$ $\iff$
$\I\leq_K\Fin$ $\iff$ $\I\restriction A=\Fin(A)$ for some $A\in\I^+$.
It is easy to show the following   
\begin{proposition}
\label{prop:tall}
The ideals $\Hindman$, $\Ramsey$, $\vdW$ and $\I_{1/n}$ are tall.
\end{proposition}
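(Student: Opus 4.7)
The plan is to treat $\Hindman$, $\vdW$ and $\I_{1/n}$ uniformly by a single lacunary construction, and to handle $\Ramsey$ separately by a star-versus-matching dichotomy.

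For the first three ideals, given an infinite $A\subseteq\omega$, I would recursively pick $b_0<b_1<\dots$ in $A$ with $b_0\geq 1$ and $b_{n+1}>2b_n$; this is possible because $A$ is infinite. Write $B=\{b_n:n\in\omega\}$. An easy induction gives $b_n\geq 2^n$, so $\sum_n 1/(b_n+1)\leq\sum_n 2^{-n}<\infty$ and $B\in\I_{1/n}$. If $b_i<b_j<b_k$ formed a three-term arithmetic progression in $B$, then $b_k=2b_j-b_i<2b_j<b_{j+1}\leq b_k$, a contradiction; hence $B\in\vdW$. Finally, for any $i<j$ one has $b_j<b_i+b_j\leq b_{j-1}+b_j<3b_j/2<b_{j+1}$, so $b_i+b_j\notin B$; then for any infinite $C\subseteq B$ the sum of its two smallest elements lies in $\FS(C)\setminus B$, showing $\FS(C)\not\subseteq B$ and $B\in\Hindman$.

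For $\Ramsey$, given an infinite $A\subseteq[\omega]^2$, I would view $A$ as the edge set of a graph on $\omega$; since $A$ is infinite, so is its vertex set. If some vertex $v$ has infinite $A$-degree, let $B$ consist of infinitely many edges of $A$ through $v$: for any distinct $a,b,c$ with $[\{a,b,c\}]^2\subseteq B$, all three of $\{a,b\},\{a,c\},\{b,c\}$ would contain $v$, forcing $v\in\{a,b\}\cap\{a,c\}\cap\{b,c\}=\emptyset$; hence no infinite $C$ yields $[C]^2\subseteq B$ and $B\in\Ramsey$. Otherwise every vertex has finite $A$-degree, and a greedy construction produces an infinite matching $B\subseteq A$, since at each stage only finitely many edges of $A$ meet the finitely many vertices already chosen; any two edges of a matching are disjoint, so again no set of size at least $3$ has all its pairs in $B$, giving $B\in\Ramsey$.

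The whole proposition is elementary and I do not expect a serious obstacle; the only mildly delicate point is verifying, for $\Hindman$, that every sum $b_i+b_j$ with $i<j$ lies strictly between $b_j$ and $b_{j+1}$, which is exactly what the lacunarity $b_{n+1}>2b_n$ is arranged to guarantee.
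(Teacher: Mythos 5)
Your argument is correct and complete; note that the paper offers no proof at all here, merely asserting the proposition as ``easy to show,'' so there is nothing to compare against. Your lacunary construction (taking $b_{n+1}>2b_n$ inside an arbitrary infinite $A$) cleanly handles $\I_{1/n}$, $\vdW$ and $\Hindman$ at once, and the star-versus-matching dichotomy for $\Ramsey$ is the standard argument; the only place where you elide a small step is in the $\Hindman$ case, where one should note that $\FS(C)\subseteq B$ forces $C\subseteq B$ (since $C\subseteq\FS(C)$), which is exactly what licenses restricting attention to infinite $C\subseteq B$ before invoking $b_j<b_i+b_j<b_{j+1}$ for $i<j$.
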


Ideals  $\I$ and $\J$ on  $X$ and $Y$, respectively are \emph{isomorphic} 
(in short: $\I\approx \J$) if 
 there exists  a bijection $\phi:X\to Y$ 
 such that $ A\in \I \iff \phi[A] \in \J$ for each $A\subseteq X$. An ideal  $\I$ is \emph{homogeneous} \cite[Definition~1.3]{MR3594409} if 
 the ideals $\I$ and $\I\restriction A$ are isomorphic for every $A\in \I^+$.

\begin{proposition}[{\cite[Examples 2.5 and 2.6]{MR3594409}}]
\label{prop:homogeneous}
The ideals $\Hindman$, $\Ramsey$ and $\vdW$ are homogeneous.
\end{proposition}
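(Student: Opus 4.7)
All three ideals share the feature that a set $A$ lies in the coideal exactly when $A$ contains a prescribed combinatorial structure---an infinite clique $[B]^2$ for $\Ramsey$, a set $\FS(B)$ with $B\in[\omega]^\omega$ for $\Hindman$, arbitrarily long arithmetic progressions for $\vdW$---and in each case the plan is to use this structure to construct, for every $A$ in the coideal, a bijection $\phi$ from the underlying set of $\I$ onto $A$ carrying $\I$ to $\I\restriction A$.

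For $\Ramsey$, I would pick $B\in[\omega]^\omega$ with $[B]^2\subseteq A$. An enumeration $g\colon\omega\to B$ induces $\bar g\colon[\omega]^2\to[B]^2$, $\{i,j\}\mapsto\{g(i),g(j)\}$, which is an isomorphism between $\Ramsey$ and $\Ramsey\restriction[B]^2$ since $\bar g$ sends $[D]^2$ to $[g[D]]^2$. It remains to modify $\bar g$ into a bijection $\phi\colon[\omega]^2\to A$ by a back-and-forth absorbing $A\setminus[B]^2$, designed so that the pairs on which $\phi$ and $\bar g$ disagree form an edge set of finite degree at every vertex of $\omega$. A short greedy K\"onig-type argument then shows that any infinite clique can be thinned to a subclique untouched by such a modification, which yields both directions of the ideal equivalence.

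For $\Hindman$ the recipe is parallel: pick $B\in[\omega]^\omega$ with $\FS(B)\subseteq A$ and, by further thinning, ensure distinct sums (e.g.~$b_{n+1}>\sum_{i\le n}b_i$), so that $F\mapsto\sum_{i\in F}b_i$ is a bijection $[\omega]^{<\omega}\setminus\{\emptyset\}\to\FS(B)$ carrying disjoint finite unions of indices to sums of $b_i$'s. Composing with the natural binary coding $\omega\setminus\{0\}\to[\omega]^{<\omega}\setminus\{\emptyset\}$, whose defining property is that disjoint unions correspond to integer sums without carry, yields an isomorphism $\Hindman\approx\Hindman\restriction\FS(B)$, and a further back-and-forth absorbs $A\setminus\FS(B)$ without altering $\FS$-witnesses.

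The $\vdW$ case is the most subtle, because positive sets contain no single ``$\omega$-closed'' substructure playing the role of $[B]^2$ or $\FS(B)$; only APs of each individual finite length are guaranteed. Here I would iteratively apply van der Waerden's theorem inside $A$ to extract pairwise disjoint APs of every length, and then interleave them with an enumeration of $A$ so that the resulting bijection $\phi\colon\omega\to A$ sends arbitrarily long APs to arbitrarily long APs in both directions. Coordinating the APs of different lengths so that the preimage in $\omega$ of a long AP inside $A$ is itself a long AP is where I expect the main technical obstacle to lie.
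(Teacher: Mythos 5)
This proposition is cited from the literature (\cite[Examples 2.5 and 2.6]{MR3594409}); the paper contains no proof of its own, so I can only judge your plan on its merits, and there is a genuine gap. For the Ramsey case, the constraint that the pairs on which $\phi$ and $\bar g$ disagree form a graph of finite degree at every vertex is incompatible with $\phi$ being an isomorphism whenever $A\setminus[B]^2$ is $\Ramsey$-positive, and that situation cannot be avoided by a clever choice of $B$. Take $A=[E]^2\cup[O]^2$ with $E$ the evens and $O$ the odds. Then $A\in\Ramsey^+$, and any infinite $B$ with $[B]^2\subseteq A$ must satisfy $B\subseteq E$ or $B\subseteq O$ (a mixed pair would lie in $[B]^2\setminus A$); say $B\subseteq E$. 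Then $[O]^2\subseteq A\setminus[B]^2$ and $[O]^2\cap\ran(\bar g)=\emptyset$, so every pair in $\phi^{-1}\bigl[[O]^2\bigr]$ is a disagreement pair. But an isomorphism must make $\phi^{-1}\bigl[[O]^2\bigr]$ contain an infinite clique, and a locally finite graph contains none. Your K\"onig-type thinning only delivers the forward implication ($S\in\Ramsey^+\Rightarrow\phi[S]\in\Ramsey^+$) and says nothing about the backward one, which is exactly where the modification set must itself carry positive structure.

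The Hindman case has the identical obstruction: $A\setminus\FS(B)$ can be $\Hindman$-positive (e.g.\ $A=\FS(B_1)\cup\FS(B_2)$ for two disjoint very sparse sets), so a back-and-forth that ``absorbs $A\setminus\FS(B)$ without altering $\FS$-witnesses'' cannot be an isomorphism --- it must in fact create new $\FS$-witnesses inside $\phi^{-1}[A\setminus\FS(B)]$. You also pass over a nontrivial step before that: to make $h\colon n\mapsto\sum_{i\in\alpha(n)}b_i$ an isomorphism $\Hindman\approx\Hindman\restriction\FS(B)$ you need the backward direction, namely that every infinite $D$ admits an infinite $E\subseteq\FS(D)$ with $\FS(E)\subseteq\FS(D)$ and pairwise disjoint binary supports; this is true but is itself an application of Hindman's theorem and needs to be isolated as a lemma. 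Finally, for van der Waerden you have only named the obstacle; coordinating the finite arithmetic progressions so that both implications of the isomorphism hold is the entire content of that case, and as written it is not a proof.
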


By identifying subsets of $X$  with their characteristic functions,
we equip $\cP(X)$ with the topology of the space $2^X$ (the product topology of countably many copies of the discrete topological space $\{0,1\}$) and therefore
we can assign topological notions to ideals on $X$.
In particular, an ideal $\I$ is \emph{Borel} (\emph{$F_\sigma$}, resp.) if $\I$ is a Borel  ($F_\sigma$, resp.) subset of $2^X$.

If $A\subseteq\omega$ and $n\in \omega$, we write
$A+n = \{a+n:a\in A\}$ and $A-n=\{a-n:a\in A,a\geq n\}$. 

A set $D \subseteq \omega$ is \emph{sparse} \cite[p.~1598]{MR1887003} if for each $x\in \FS(D)$ there exists the unique set $\alpha \subseteq D$ such that $x = \sum_{n\in \alpha} n$. This unique set will be denoted by $\alpha_D(x)$.
For instance, the set $E = \{2^n:n\in\omega\}$ is sparse, and in the sequel, we write $\alpha(x)$ instead of $\alpha_E(x)$.

A set $D \subseteq \omega$ is \emph{very sparse} \cite[p.~894]{MR4356195} if it is sparse and 
$$\forall x,y\in \FS(D)\, \left(\alpha_D(x)\cap \alpha_D(y) \neq\emptyset \implies x+y\notin \FS(D)\right).$$ In the sequel, we will use the following 
\begin{lemma}[{\cite[Lemma 2.2]{MR4356195}}]
\label{lem:VERY-SPARSE}
For every infinite set $D \subseteq \omega$ there is an infinite set $D'\subseteq D$ which is very sparse.
\end{lemma}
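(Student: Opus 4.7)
The plan is to choose a very rapidly growing subsequence of $D$ and then verify both the sparsity of $D'$ and the extra very-sparse condition by a single ``largest-index'' argument applied to a signed sum with small bounded integer coefficients.

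First I would recursively pick $d_0<d_1<d_2<\cdots$ from $D$ so that
\[
d_{n+1}>2\sum_{i\le n}d_i\qquad\text{for every }n\in\omega,
\]
which is possible because $D$ is unbounded. Put $D':=\{d_n:n\in\omega\}$; this will be the desired very sparse subset.

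Sparsity of $D'$ can be handled first. Suppose $\sum A=\sum A'$ for finite $A,A'\subseteq D'$, and write the equation as $\sum_n(a_n-a'_n)d_n=0$, where $a_n,a'_n\in\{0,1\}$ are the indicators of $d_n\in A$ and $d_n\in A'$. If $A\ne A'$, take the largest $N$ with $a_N\ne a'_N$; since $|a_N-a'_N|=1$, the growth condition gives
\[
d_N=|a_N-a'_N|\,d_N=\Bigl|\sum_{n<N}(a_n-a'_n)d_n\Bigr|\le\sum_{n<N}d_n<d_N,
\]
a contradiction, so $A=A'$.

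The main step is verifying the very-sparse condition. Suppose $x,y\in\FS(D')$ with $\alpha_{D'}(x)\cap\alpha_{D'}(y)\ne\emptyset$; I would assume toward contradiction that $x+y=\sum C$ for some finite $C\subseteq D'$. Writing $A=\alpha_{D'}(x)$, $B=\alpha_{D'}(y)$, and taking indicators $a_n,b_n,c_n\in\{0,1\}$ for membership of $d_n$ in $A$, $B$, $C$, one obtains
\[
\sum_{n}(a_n+b_n-c_n)\,d_n=0,\qquad(a_n+b_n-c_n)\in\{-1,0,1,2\}.
\]
If every coefficient is zero, then $a_n+b_n=c_n\le 1$ for all $n$, forcing $A\cap B=\emptyset$, contradicting the hypothesis. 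Otherwise let $N$ be the largest index with nonzero coefficient; since $|a_N+b_N-c_N|\ge 1$, the growth condition yields
\[
d_N\le|a_N+b_N-c_N|\,d_N=\Bigl|\sum_{n<N}(a_n+b_n-c_n)d_n\Bigr|\le 2\sum_{n<N}d_n<d_N,
\]
again a contradiction.

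There is no serious obstacle; the only delicate point is the growth factor. The choice $d_{n+1}>2\sum_{i\le n}d_i$ is made so that the largest-index coefficient---bounded in absolute value by $1$ in the sparsity check and by $2$ in the very-sparse check---is forced to dominate the entire tail, producing both contradictions from the same estimate.
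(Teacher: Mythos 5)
Your proof is correct. The paper itself does not prove this lemma but cites it from the reference \cite{MR4356195}; your argument supplies a clean, self-contained proof using the standard device of a superexponentially growing subsequence, and the estimates go through exactly as you claim. In particular, choosing $d_{n+1}>2\sum_{i\le n}d_i$ gives $2\sum_{i<N}d_i<d_N$, which simultaneously kills the largest-index coefficient in both checks: in the sparsity check the coefficient has absolute value $1$ and the tail is bounded by $\sum_{i<N}d_i$, while in the very-sparse check the coefficient in $\{-1,0,1,2\}$ has absolute value at least $1$ and the tail is bounded by $2\sum_{i<N}d_i$, yielding $d_N<d_N$ either way. The observation that all-zero coefficients force $a_n+b_n=c_n\le 1$ (hence $A\cap B=\emptyset$) closes the remaining case. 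This is essentially the canonical argument, and there is no gap.
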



\section{Summable and van der Waerden ideals are not above Hindman and Ramsey ideals}

To show that $F_\sigma$ ideals are not above $\Hindman$ nor $\Ramsey$ in the Kat\v{e}tov order one can use the following ideal on $\omega^2$ introduced by Kat\v{e}tov \cite[Definition 5.1]{MR0355956}:
$$\Fin^2=\left\{C\subseteq\omega^2: 
\{n\in \omega: \{k\in\omega:(n,k)\in C\}\notin \Fin\}\in \Fin\right\}.$$

The following lemma and proposition can be found in \cite{naszUnified}, but we decided to include proofs here  for the sake of  completeness.

\begin{lemma}[{\cite[Proposition~7.2]{naszUnified}}]\
\label{lemat}
\begin{itemize}
\item[(a)] $\Fin^2\leq_K \cH$.
\item[(b)] $\Fin^2\leq_K \cR$.
\end{itemize}
\end{lemma}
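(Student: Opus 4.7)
For (b), set $f \colon [\omega]^2 \to \omega \times \omega$ by $f(\{i,j\}) = (\min\{i,j\}, \max\{i,j\})$. For any infinite $B \subseteq \omega$, the image $f[[B]^2] = \{(i,j) : i, j \in B,\, i < j\}$ has $i$-th column equal to $B \cap (i,\infty)$ whenever $i \in B$, which is infinite; since $B$ is infinite, infinitely many columns of $f[[B]^2]$ are infinite, so $f[[B]^2] \notin \Fin^2$. Because $\Fin^2$ is downward closed under $\subseteq$, this gives $f^{-1}[C] \in \cR$ for every $C \in \Fin^2$.

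For (a), the plan is to use the 2-adic decomposition. Let $A_i = \{x > 0 : \min \alpha(x) = i\}$, so that $\omega \setminus \{0\} = \bigsqcup_{i \geq 0} A_i$ and each $A_i$ is infinite, and define $f \colon \omega \to \omega \times \omega$ by $f(x) = (i,j)$ for $x = 2^i(2j+1) > 0$ and $f(0) = (0,0)$. Then $f|_{A_i}$ bijects $A_i$ onto $\{i\} \times \omega$, so $f[\FS(B)] \notin \Fin^2$ is equivalent to: $A_i \cap \FS(B)$ is infinite for infinitely many $i$. Given an arbitrary infinite $B$, I first apply Lemma~\ref{lem:VERY-SPARSE} to replace it by an infinite very sparse $D \subseteq B$, and then split by whether $\min \alpha$ is bounded on $D$.

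If $\min \alpha$ is unbounded on $D$, greedily pick $b_0 < b_1 < \cdots$ in $D$ with $\min \alpha(b_0) < \min \alpha(b_1) < \cdots$. Then for each fixed $k$ and every $j > k$, the sum $b_k + b_j \in \FS(D)$ satisfies $\min \alpha(b_k + b_j) = \min \alpha(b_k)$, because the $2$-adic valuation of a sum of two numbers of distinct valuations equals the smaller one. The sums $\{b_k + b_j : j > k\}$ are pairwise distinct, witnessing $|A_{\min \alpha(b_k)} \cap \FS(D)| = \infty$ for each of the infinitely many indices $k$, as required.

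If $\min \alpha$ is bounded on $D$, pigeonhole extracts an infinite $D'' \subseteq D$ with $\min \alpha(d) = m$ constant; writing $D'' = 2^m G$ with $G$ an infinite set of odd integers (very sparse because $D$ is), the goal reduces to proving $A_k \cap \FS(G)$ is infinite for every $k \geq 0$, since $\FS(D'') \cap A_{m+k} = 2^m(A_k \cap \FS(G))$. For each $k$, residue-class pigeonhole modulo $2^{k+1}$ (which has only $2^k$ odd residues) yields an odd $r$ and an infinite $G_k \subseteq G$ with $g \equiv r \pmod{2^{k+1}}$ for all $g \in G_k$; the sum of any $2^k$ distinct elements of $G_k$ is congruent to $2^k r \equiv 2^k \pmod{2^{k+1}}$ (using that $r$ is odd), hence has $2$-adic valuation exactly $k$, and sparseness of $G$ makes these sums pairwise distinct. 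The main technical obstacle is this bounded case, where one must manufacture sums with every prescribed $2$-adic valuation from a set all of whose elements share the same valuation; residue-class pigeonhole modulo $2^{k+1}$ is the key trick, and very sparseness supplies the distinctness needed to make the fibers infinite rather than merely non-empty.
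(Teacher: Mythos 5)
Your proposal is correct and uses the same reduction maps as the paper (the $2$\nobreakdash-adic column decomposition for (a), and sending $\{i,j\}$ with $i<j$ to the $i$-th column for (b)), but the verifications are genuinely different. For (b), the paper argues by contradiction using an almost-containment $B\subseteq^* A_n$; you instead check directly that $f[[B]^2]$ has infinitely many infinite columns, which is shorter and proves the stronger statement (infinitely many infinite columns rather than at least one). For (a), the paper simply cites \cite[Proposition~1.1]{MR4356195} for the two needed combinatorial facts, whereas you give a self-contained proof: replace $B$ by a very sparse $D\subseteq B$, split on whether $\min\alpha$ is bounded on $D$, and in the bounded case factor out $2^m$ and use residue-class pigeonhole modulo $2^{k+1}$ to manufacture sums of every $2$\nobreakdash-adic valuation, with sparseness of $G$ supplying that distinct $2^k$\nobreakdash-element subsets yield distinct sums. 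In fact you establish the stronger statement that $f[\FS(B)]$ meets infinitely many columns infinitely often, from which the Kat\v{e}tov reduction follows in one line without the paper's separate appeal to $A_k\in\Hindman$. The details all check out: in the unbounded case $b_k+b_j$ has valuation $\min\alpha(b_k)$ and the sums are pairwise distinct since the $b_j$ are; in the bounded case $D''\subseteq D$ remains sparse (hence so does $G=D''/2^m$), giving infinitude of $A_k\cap\FS(G)$ for every $k$. The trade-off is that your version of (a) is longer but avoids an external citation, and your version of (b) is both shorter and sharper.
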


\begin{proof}
(a): Let $A_k=\{2^k(2n+1): n\in\omega\}$ for each $k\in\omega$. Let $f:\omega\to\omega^2$ be any injective function such that $f[A_k]\subseteq\{k\}\times\omega$ for all $k\in\omega$. In  \cite[item (2) in the proof of Proposition~1.1]{MR4356195}, the authors showed that $A_k\in \Hindman$ for every $k\in\omega$ (so $f^{-1}[\{k\}\times\omega]\in\Hindman$ for all $k\in\omega$), whereas in 
\cite[item (1) in the proof of Proposition~1.1]{MR4356195} it is shown that 
for every $B\notin\Hindman$  there is $k\in\omega$ such that $B\cap A_k$ is infinite (so $f^{-1}[C]\in\Hindman$ whenever $C\subseteq\omega^2$ is such that $C\cap(\{k\}\times\omega)$ is finite for all $k\in\omega$). 
Thus, the function $f$ witnesses the fact that $\Fin^2\leq_K \cH$. 

(b): Let $A_n = \{\{k,i\}: i>k\geq n\}$ for every $n\in\omega$. 
Then $A_n\notin\Ramsey$, $A_0=[\omega]^2$, $\bigcap_{n\in\omega}A_n=\emptyset$ and $A_n\setminus A_{n+1} = \{\{n,i\}: i> n\}\in \Ramsey$. Let $f:[\omega]^2\to\omega^2$ be any injective function such that $f[A_n\setminus A_{n+1}]\subseteq\{n\}\times\omega$. Then $f^{-1}[\{n\}\times\omega]\in\Ramsey$ for all $n\in\omega$. 
Suppose, for sake of contradiction, that there is $C\subseteq\omega^2$ such that $C\cap(\{k\}\times\omega)$ is finite for all $k\in\omega$ (so $C\in\Fin^2$), but $B=f^{-1}[C]\notin\Ramsey$. Then $B\subseteq^* A_n$ for every $n\in\omega$.
Let $H=\{h_n:n\in\omega\}$ be an infinite set such that $[H]^2\subseteq B$ and $h_n<h_{n+1}$ for every $n\in \omega$.
Since $[H]^2\subseteq^*A_{h_1}$, there is a finite set $F$ such that 
$[H]^2\setminus F\subseteq A_{h_1}$.
Since $F$ is finite, there is $k>0$ such that $\{h_0,h_n\}\notin F$ for every $n\geq k$.
Then  $\{\{h_0,h_n\}:n\geq k\}\subseteq [H]^2\setminus F$ and $\{\{h_0,h_n\}:n\geq k\}\cap A_{h_1}=\emptyset$, a contradiction.
\end{proof}

\begin{proposition}[{\cite[Theorem~7.7]{naszUnified}}]\
\begin{itemize}
\item[(a)] $\mathcal{H}\not\leq_K\mathcal{W}$.
\item[(b)] $\mathcal{R}\not\leq_K\mathcal{W}$.
\item[(c)] $\mathcal{H}\not\leq_K\mathcal{I}_{1/n}$.
\item[(d)] $\mathcal{R}\not\leq_K\mathcal{I}_{1/n}$.
\end{itemize}
\end{proposition}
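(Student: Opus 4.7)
My plan is to observe that by Lemma~\ref{lemat} and the transitivity of the Kat\v{e}tov order, all four items follow from the two non-reducibilities $\Fin^2 \not\leq_K \vdW$ and $\Fin^2 \not\leq_K \I_{1/n}$, which I shall prove separately.

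For (c) and (d), I will use that the summable ideal $\I_{1/n}$ is a P-ideal (given $A_n\in\I_{1/n}$, one trims each by a finite initial piece so that the union still has convergent sum of reciprocals). Suppose some $f:\omega\to\omega^2$ witnessed $\Fin^2\leq_K \I_{1/n}$. Then the columns $X_n:=f^{-1}[\{n\}\times\omega]$ would all lie in $\I_{1/n}$, and the P-ideal property would supply $X\in\I_{1/n}$ with $X_n\subseteq^* X$ for every $n$. Setting $C:=f[\omega\setminus X]$, each column $C\cap(\{n\}\times\omega)$ equals the finite set $f[X_n\setminus X]$, so $C\in\Fin^2$; yet $f^{-1}[C]\supseteq \omega\setminus X\notin\I_{1/n}$, a contradiction.

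For (a) and (b) the P-ideal trick fails because $\vdW$ is not a P-ideal, and this is the main obstacle. The key plan is instead to use van der Waerden's theorem self-referentially: applied to the two-colouring of a sufficiently long initial segment of $\omega$ by $Y\in\vdW$ and its complement, it forces $\omega\setminus Y$ to contain arithmetic progressions of every length. Assuming $f:\omega\to\omega^2$ witnesses $\Fin^2\leq_K \vdW$, I may assume every $X_n:=f^{-1}[\{n\}\times\omega]$ lies in $\vdW$, since otherwise $A=\{n\}\times\omega\in\Fin^2$ already refutes the reduction via $f^{-1}[A]=X_n\notin\vdW$. Then each finite union $Y_k:=\bigcup_{n<k}X_n$ is in $\vdW$, and the above observation yields an AP $P_k\subseteq \omega\setminus Y_k$ of length $k$. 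I then take $A:=\bigcup_k f[P_k]$: the disjointness $P_k\cap X_n=\emptyset$ for $n<k$ makes each column $A\cap(\{n\}\times\omega)\subseteq\bigcup_{k\leq n}f[P_k]$ finite, so $A\in\Fin^2$, while $f^{-1}[A]\supseteq \bigcup_k P_k\notin \vdW$ since $\bigcup_k P_k$ contains APs of every length, yielding the desired contradiction.
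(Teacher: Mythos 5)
Your proposal is correct, and it takes a genuinely different route from the paper in the key step. Both you and the paper reduce everything to $\Fin^2\not\leq_K\vdW$ and $\Fin^2\not\leq_K\I_{1/n}$ via Lemma~\ref{lemat} and transitivity. The paper then invokes a single black box: both $\vdW$ and $\I_{1/n}$ are $F_\sigma$ ideals, and by a cited general theorem no $F_\sigma$ ideal is Kat\v{e}tov above $\Fin^2$. You instead give two self-contained elementary arguments, one for each target ideal. For $\I_{1/n}$, you use the P-ideal property to find a single small set $X$ almost containing every column preimage $X_n$, and then $C=f[\omega\setminus X]$ is a $\Fin^2$-set whose preimage is large; this argument works for any P-ideal, a class which is neither contained in nor contains the $F_\sigma$ ideals. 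For $\vdW$, which is not a P-ideal (and where the P-ideal trick genuinely fails, as you note), you use van der Waerden's theorem directly: each finite union $Y_k=\bigcup_{n<k}X_n\in\vdW$ forces an arithmetic progression $P_k$ of length $k$ in $\omega\setminus Y_k$, and the diagonal set $A=\bigcup_k f[P_k]$ has finite columns while its preimage contains $\bigcup_k P_k\notin\vdW$. What the paper's route buys is brevity and uniformity (one cited theorem covers both ideals at once); what your route buys is that it avoids the machinery of $F_\sigma$ ideals and weak P-ideals entirely, producing explicit witnesses from the basic definitions of the two ideals. Both are valid proofs.

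One minor stylistic remark: your $\vdW$ argument is essentially a proof of a known lemma that $\Fin^2\not\leq_K\J$ whenever $\J$ is a weak P-ideal, specialised to $\J=\vdW$ using van der Waerden's theorem to supply the diagonal selector $P_k$. You could extract that lemma to make the parallel with the P-ideal case cleaner, but as written the argument is fine.
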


\begin{proof}
(a): Suppose otherwise: $\cH\leq_K\cW$. 
Using Lemma \ref{lemat} we get that $\Fin^2\leq_K \cH$, so $\Fin^2\leq_K\cW$. However, since $\cW$ is $F_\sigma$ (see \cite[Example~4.12]{MR4572258}), $\Fin^2\not\leq_K\cW$ (by \cite[Theorems 7.5 and 9.1]{MR2520152} and \cite[Example 4.1]{MR3034318}). A contradiction.

The proofs of items (b), (c) and (d) are similar to the proof of item (a), since $\Fin^2\leq_K \cR$ (by Lemma \ref{lemat}) and $\I_{1/n}$ is $F_\sigma$ (see \cite[Example~1.5]{MR1124539}).
\end{proof}


\section{Summable ideal is not below van der Waerden ideal}

\begin{proposition}
$\mathcal{I}_{1/n}\not\leq_K\mathcal{W}$.
\end{proposition}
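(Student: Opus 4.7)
The goal is to show that $\mathcal{I}_{1/n}$ is not below $\vdW$ in the Kat\v{e}tov order, i.e., for every $f\colon\omega\to\omega$ I need to produce $A\in\mathcal{I}_{1/n}$ with $f^{-1}[A]\notin\vdW$. Fix such an $f$ and set $T_M:=f^{-1}[[M,\infty)]$ for $M\in\omega$. The plan rests on a clean dichotomy: since $\vdW$ is a proper ideal and $\omega=f^{-1}[[0,M)]\cup T_M\notin\vdW$, for every $M$ at least one of the two pieces fails to lie in $\vdW$, i.e., contains arithmetic progressions of every finite length.

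The easy branch is when some $M$ satisfies $f^{-1}[[0,M)]\notin\vdW$: then $A=[0,M)$ is finite, hence automatically in $\mathcal{I}_{1/n}$, and $f^{-1}[A]\notin\vdW$ by the choice of $M$, finishing the proof.

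The main branch is the complementary one, in which $f^{-1}[[0,M)]\in\vdW$ for every $M$, and therefore $T_M\notin\vdW$ for every $M$. Here I would build $A$ stage by stage: for each $k\geq 1$ pick a threshold $M_k$ large enough that any $k$ points in $[M_k,\infty)$ contribute a reciprocal-sum smaller than $2^{-k}$ (any $M_k\geq k\cdot 2^k$ works), then use $T_{M_k}\notin\vdW$ to choose a $k$-term arithmetic progression $P_k\subseteq T_{M_k}$, and finally set $A:=\bigcup_{k\geq 1}f[P_k]$. The bound $|f[P_k]|\leq k$ together with $f[P_k]\subseteq[M_k,\infty)$ forces $\sum_{n\in A}1/(n+1)\leq\sum_{k\geq 1} 2^{-k}<\infty$, so $A\in\mathcal{I}_{1/n}$, while $P_k\subseteq f^{-1}[A]$ for every $k$ shows $f^{-1}[A]\notin\vdW$.

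I do not anticipate any serious obstacle: the argument reduces entirely to the dichotomy above together with an elementary tail estimate for $\sum 1/(n+1)$. A small conceptual point worth flagging is that the main branch uses only a minimal feature of $\vdW$ --- for each $k$ separately one just needs a single $k$-AP in $T_{M_k}$, not a single set containing progressions of all lengths --- so the argument invokes only the definition of the van der Waerden ideal, not van der Waerden's theorem itself.
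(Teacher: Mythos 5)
Your proposal is correct and takes essentially the same approach as the paper. The paper argues by contradiction and, assuming $\phi^{-1}[B]\in\vdW$ for all $B\in\I_{1/n}$, for each $n$ picks an $n$-term arithmetic progression $F_n$ inside $\phi^{-1}[[\,n2^n,\infty)]$ (this set is $\vdW$-positive precisely because $\phi^{-1}[\{0,\dots,n2^n-1\}]\in\vdW$, which is your ``main branch''), then shows $\phi[\bigcup F_n]\in\I_{1/n}$; your $P_k\subseteq T_{M_k}$ with $M_k\geq k2^k$ and $A=\bigcup f[P_k]$ is the same construction written as a direct proof, with the ``easy branch'' of your dichotomy simply absorbed into the contradiction hypothesis in the paper's phrasing.
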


\begin{proof}
Suppose for sake of contradiction that 
there is a function  $\phi:\omega\to\omega$
such that $\phi^{-1}[B]\in \vdW$ for every $B\in \I_{1/n}$. 
We construct a sequence $(F_n:n\in\omega)$ of finite subsets of $\omega$ such that for every $n\in\omega$ we have
\begin{enumerate}
    \item $F_n$ is an arithmetic progression of length $n$,
    \item $\phi(x)\geq n2^n$ for every $x\in F_n$.
\end{enumerate}
Suppose that $F_i$ are constructed for $i<n$.
Since $B=\{i\in\omega: i<n2^n\}$ is finite,  $A=\phi^{-1}[B]\in \vdW$.
Then $\omega\setminus A\notin \vdW$, so there is an arithmetic progression $F_n\subseteq \omega\setminus A$ of length $n$. This finishes the construction of $F_n$.

Let $A=\bigcup\{F_n:n\in\omega\}$. Then $A\notin\vdW$, but 
$$
\sum_{y\in \phi[A]} \frac{1}{y+1} 
\leq 
\sum_{n\in\omega} \left(\sum_{x\in F_n} \frac{1}{\phi(x)+1} \right) 
\leq 
\sum_{n\in\omega} \left(\sum_{x\in F_n} \frac{1}{n2^n+1} \right) 
= 
\sum_{n\in\omega} \frac{n}{n2^n+1} <\infty, 
$$
so $\phi[A]\in \I_{1/n}$, a contradiction.
\end{proof}


\section{Summable ideal is not below Hindman ideal}

\begin{theorem}
$\I_{1/n} \not\leq_K \Hindman$.
\end{theorem}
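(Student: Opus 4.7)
The plan is a proof by contradiction. Suppose $f:\omega\to\omega$ witnesses $\I_{1/n}\leq_K\Hindman$; then $A_K:=f^{-1}[\{0,\ldots,K\}]\in\Hindman$ for every $K\in\omega$, because every finite set lies in $\I_{1/n}$. I aim to construct an infinite set $D=\{d_0<d_1<\cdots\}\subseteq\omega$ with $\sum_{x\in\FS(D)}1/(f(x)+1)<\infty$. Setting $A:=f[\FS(D)]$, one then has $A\in\I_{1/n}$, yet $\FS(D)\subseteq f^{-1}[A]$, so $f^{-1}[A]\notin\Hindman$, contradicting the Kat\v{e}tov reduction.

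I would build $D$ inductively, keeping $\{d_0,\ldots,d_{k-1}\}$ very sparse (invoking Lemma~\ref{lem:VERY-SPARSE}) and taking $d_k>2\sum_{i<k}d_i$. Writing $\Sigma_k:=\FS(\{d_0,\ldots,d_{k-1}\})\cup\{0\}$, so that $|\Sigma_k|=2^k$, at step $k$ I would select $d_k$ so that
\[
\sum_{s\in\Sigma_k}\frac{1}{f(d_k+s)+1}\leq\frac{1}{2^k}.
\]
Summing over $k$ bounds $\sum_{x\in\FS(D)}1/(f(x)+1)$ by a convergent geometric series.

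The crux is that such a $d_k$ exists at every step. If not at step $k$, then for all sufficiently large $d$, $\sum_{s\in\Sigma_k}1/(f(d+s)+1)>2^{-k}$, and by averaging each such $d$ admits some $s\in\Sigma_k$ with $f(d+s)<4^k$. The coloring $\chi(d):=\min\{s\in\Sigma_k:f(d+s)<4^k\}$ is defined on a cofinite set, so Hindman's finite sums theorem yields an infinite $D^*\subseteq\omega$ and $s^*\in\Sigma_k$ with $\FS(D^*)+s^*\subseteq A_{4^k-1}\in\Hindman$. If $s^*=0$, this is an immediate contradiction.

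The main obstacle is the case $s^*>0$, where one only has a translated IP-set inside $A_{4^k-1}$ rather than an honest one. My plan for handling this is to apply Hindman once more on $D^*$ with the $2$-coloring $[x\in A_{4^k-1}]$; the monochromatic color cannot be $A_{4^k-1}$ without immediate contradiction, so we obtain $D^{**}\subseteq D^*$ with both $\FS(D^{**})\subseteq\omega\setminus A_{4^k-1}$ and $\FS(D^{**})+s^*\subseteq A_{4^k-1}$. Exploiting the unique very-sparse decomposition $s^*=\sum_{j\in J}d_j$ and a finite iteration of Hindman's theorem over the $2^{|J|}$ colorings $[x+\sum_{j\in J'}d_j\in A_{4^k-1}]$ indexed by $J'\subseteq J$, I would then refine $D^{**}$ further and splice in selected $d_j$'s ($j\in J$) to build an infinite $E$ with $\FS(E)\subseteq A_{4^k-1}$, contradicting $A_{4^k-1}\in\Hindman$. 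Converting a translated IP-containment into an honest one inside $A_{4^k-1}$ is the technical heart of the proof.
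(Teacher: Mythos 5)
Your overall framework -- build $D=\{d_0<d_1<\cdots\}$ greedily so that at step $k$ the new ``layer'' $d_k+\Sigma_k$ contributes at most $2^{-k}$ to $\sum_{x\in\FS(D)}\tfrac{1}{f(x)+1}$, then conclude $f[\FS(D)]\in\I_{1/n}$ -- is sound, and it is genuinely different from the paper's proof, which invokes the Canonical Hindman Theorem to reduce $\phi\restriction\FS(C)$ to one of five canonical forms and treats each separately. The averaging step is also fine: if no $d_k$ works then each large $d$ indeed admits some $s\in\Sigma_k$ with $f(d+s)<4^k$, and ordinary Hindman applied to the resulting $|\Sigma_k|$-coloring gives $s^*\in\Sigma_k$ and infinite $D^*$ with $\FS(D^*)+s^*\subseteq A_{4^k-1}$.

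The gap is exactly where you flag it, and it is not a technicality that more Hindman iterations will repair: a \emph{translated} IP-set inside a set of $\Hindman$ is simply not a contradiction, because $\Hindman$ is not shift-invariant. Concretely, $A=\FS(\{4^n:n\geq 1\})+1$ lies in $\Hindman$ (all its elements are $\equiv 1\pmod 4$, so no two sum back into $A$), yet it contains the translate $\FS(\{4^n\})+1$ of an IP-set. Your splicing plan cannot produce an honest $E$ with $\FS(E)\subseteq A_{4^k-1}$: once you extract $D^{**}$ with $\FS(D^{**})\subseteq\omega\setminus A_{4^k-1}$ and $\FS(D^{**})+s^*\subseteq A_{4^k-1}$, any candidate $E$ built from elements of $\FS(D^{**})$ together with some $d_j$'s ($j\in J$) must contain sums that use none of the $d_j$'s, and those land in $\FS(D^{**})\subseteq\omega\setminus A_{4^k-1}$; coloring by the $2^{|J|}$ residues $[x+\sum_{J'}d_j\in A_{4^k-1}]$ only tells you that the $J'=\emptyset$ coordinate is $0$ and the $J'=J$ coordinate is $1$, which is precisely the obstruction rather than a way around it. Moreover, the $\min$ in your definition of $\chi$ forces $d+s\notin A_{4^k-1}$ for every $s\in\Sigma_k$ with $s<s^*$, so in fact all proper sub-sums of $s^*$ are also ``bad,'' making the splice even less plausible. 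This translated-IP difficulty is precisely what the paper's use of the Canonical Hindman Theorem is designed to avoid: by passing to a $C$ on which $\phi$ depends only on $\min\alpha$, $\max\alpha$, both, all, or neither, the paper never has to convert a translated IP into an honest one.
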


\begin{proof}
This is proved in \cite[Theorem~3.2]{MR4356195}, but below we provide a simpler proof.

Let $\phi:\omega\to\omega$ be an arbitrary function.
We will show that $\phi$ is not a witness for  $\I_{1/n}\leq_K \Hindman$
 i.e.~we will find an infinite set $D\subseteq\omega$ such that $\phi[\FS(D)]\in\I_{1/n}$.
 
Using Canonical Hindman Theorem (\cite[Theorem 2.1]{MR424571}, see also \cite[Theorem~5 at p.~133]{MR1044995}),
there is an infinite set $C=\{c_n:n\in\omega\}\subseteq\omega$ such that 
$\max\alpha(c_n)<\min\alpha(c_{n+1})$ for every $n\in\omega$ and 
one of  the following five  cases holds:
\begin{enumerate}
    \item $\forall  x,y\in \FS(C) (\phi(x)=\phi(y))$, 
    \item $\forall  x,y\in \FS(C) (\phi(x)=\phi(y) \iff \min \alpha(x) =\min \alpha(y))$, 
    \item $\forall  x,y\in \FS(C) (\phi(x)=\phi(y) \iff \max \alpha(x) =\max \alpha(y))$, 
    \item $\forall  x,y\in \FS(C) (\phi(x)=\phi(y) \iff (\min \alpha(x) =\min \alpha(y) \text{ and } \max \alpha(x) =\max \alpha(y)))$, 
    \item $\forall  x,y\in \FS(C) (\phi(x)=\phi(y) \iff x=y)$.
\end{enumerate}

\emph{Case 1.}
We take $D=C$ and see that the set $\phi[\FS(D)]$ has only one element, so it belongs to $\I_{1/n}$.

\emph{Case 2.}
We construct a strictly increasing  sequence $\{k_n:n\in\omega\}$ such that 
$\phi(c_{k_n})> 2^{n}$ for every $n\in\omega$.

Suppose that $k_i$ are constructed for $i<n$.
Since $\max\alpha(c_k)<\min\alpha(c_{k+1})$ for every $k\in\omega$, 
$\min\alpha(c_k)\neq\min\alpha(c_l)$ for distinct $k,l\in\omega$.
Consequently, $\phi\restriction C$ is one-to-one, so we can find $k_n>k_{n-1}$ such that $\phi(c_{k_n})>2^{n}$.
That finishes the inductive construction of $k_n$.

Let $D=\{c_{k_n}:n\in\omega\}$. If we show that  $\phi[\FS(D)]\in\I_{1/n}$, the proof of this case will be finished. Using the properties of $c_{k_n}$'s we can see that
$\phi[c_{k_n}+\FS(\{c_{k_i}:i>n\})] = \{\phi(c_{k_n})\},$
for every $n\in\omega$, so 
\begin{equation*}
    \begin{split}
\sum_{y\in \phi[\FS(D)]} \frac{1}{y+1} 
&=
\sum_{n\in\omega}\left(\sum_{y\in \{\phi(c_{k_n})\}\cup\phi[c_{k_n}+\FS(\{c_{k_i}:i>n\})]}\frac{1}{y+1}\right)
\\&=
\sum_{n\in\omega}\frac{1}{\phi(c_{k_n})+1}
\leq 
\sum_{n\in\omega} \frac{1}{2^{n}+1}
< \infty.
\end{split} 
\end{equation*}

\emph{Case 3.}
We construct a strictly increasing  sequence $\{k_n:n\in\omega\}$ such that 
$\phi(c_{k_n})> 2^{n}$ for every $n\in\omega$.

Suppose that $k_i$ are constructed for $i<n$.
Since $\max\alpha(c_k)<\min\alpha(c_{k+1})$ for every $k\in\omega$, 
$\max\alpha(c_k)\neq\max\alpha(c_l)$ for distinct $k,l\in\omega$.
Consequently, $\phi\restriction C$ is one-to-one, so we can find $k_n>k_{n-1}$ such that $\phi(c_{k_n})>2^{n}$.
That finishes the inductive construction of $k_n$.

Let $D=\{c_{k_n}:n\in\omega\}$. If we show that  $\phi[\FS(D)]\in\I_{1/n}$, the proof of this case will be finished. Using the properties of $c_{k_n}$'s we can see that
$\phi[c_{k_n}+\FS(\{c_{k_i}:i<n\})] = \{\phi(c_{k_n})\},$
for every $n\in\omega$, so 
\begin{equation*}
    \begin{split}
\sum_{y\in \phi[\FS(D)]} \frac{1}{y+1} 
&=
\sum_{n\in\omega}\left(\sum_{y\in \{\phi(c_{k_n})\}\cup\phi[c_{k_n}+\FS(\{c_{k_i}:i<n\})]}\frac{1}{y+1}\right)
\\&=
\sum_{n\in\omega}\frac{1}{\phi(c_{k_n})+1}
\leq 
\sum_{n\in\omega}\frac{1}{2^{n}+1}
< \infty.
\end{split}
\end{equation*}

\emph{Case 4.}
We construct a strictly increasing  sequence $\{k_n:n\in\omega\}$ such that 
$$ \forall n\in\omega\,\forall i<n\,  \left( \phi(c_{k_n})>n2^{n} \land \phi(c_{k_n}+c_{k_i})>n2^{n}\right).$$

Suppose that $k_i$ are constructed for $i<n$.
Since $\max\alpha(c_k)<\min\alpha(c_{k+1})$ for every $k\in\omega$, 
we obtain that
$\min\alpha(c_k+c_{k_i})\neq \min\alpha(c_k+c_{k_j})$ and 
$\min\alpha(c_k)\neq \min\alpha(c_k+c_{k_i})$
for every $k>k_{n-1}$ and $i<j\leq n-1$.
Consequently, 
the function $\phi\restriction (\{c_{k}+c_{k_i}:k>k_{n-1}, i<n\}\cup\{c_k:k>k_{n-1}\})$
is one-to-one, so using pigeonhole principle 
we can find $k_n>k_{n-1}$ such that $\phi(c_{k_n})>n2^{n}$
and 
$\phi(c_{k_n}+c_{k_i})> n2^{n}$
for every $i<n$.
That finishes the inductive construction of $k_n$.

Let $D=\{c_{k_n}:n\in\omega\}$. If we show that  $\phi[\FS(D)]\in\I_{1/n}$, the proof of this case will be finished. Using the properties of $c_{k_n}$'s we can see that
$\phi[c_{k_m}+\FS(\{c_{k_i}:m<i<n\})+c_{k_n}] = \{\phi(c_{k_m}+c_{k_n})\}$
for every $m<n$, $m,n\in \omega$, so 
\begin{equation*}
    \begin{split}
\sum_{y\in \phi[\FS(D)]} \frac{1}{y+1} 
&=
\sum_{n\in\omega}\frac{1}{\phi(c_{k_n})+1} 
\\&+
\sum_{n\in\omega}\sum_{m<n}\left(\sum_{y\in\{\phi(c_{k_m}+c_{k_n})\}\cup \phi[c_{k_m}+\FS(\{c_{k_i}:m<i<n\})+c_{k_n}]}\frac{1}{y+1}\right)
\\&=
\sum_{n\in\omega} \frac{1}{\phi(c_{k_n})+1} 
+
\sum_{n\in\omega}\sum_{m<n}\frac{1}{\phi(c_{k_m}+c_{k_n})+1}
\\&\leq 
\sum_{n\in\omega} \frac{1}{n2^{n}+1} 
+ 
\sum_{n\in\omega}\sum_{m<n} \frac{1}{n2^{n}+1}
<\infty.
    \end{split}
\end{equation*}

\emph{Case 5.}
We construct inductively a strictly increasing  sequence $\{k_n:n\in\omega\}$ such that 
$$
\forall n\in\omega\, \forall x\in \FS(\{c_{k_i}:i<n\})\, 
\left(\phi(c_{k_n})> 2^{2n}
\land 
\phi(c_{k_n}+x)> 2^{2n}\right).$$

Suppose that $k_i$ are constructed for $i<n$.
Let $m\in\omega$ be such that 
$m>2^{2n}$ and $m>\phi(x)$ for every $x\in \FS(\{c_{k_i}:i<n\})$.
Since $\phi\restriction \FS(C)$ is one-to-one, the set $F=\phi^{-1}[\{0,1,\dots, m\}]$ is finite.
Let $k_n\in\omega$ be such that $c_{k_n}>\max F$.
Since $c_{k_n}>\max F$,  we obtain that $c_{k_n} \notin F$ and consequently $\phi(c_{k_n})>m> 2^{2n}$.
Similarly, for every $x\in \FS(\{c_{k_i}:i<n\})$ we have $c_{k_n}+x>c_{k_n}>\max F$, so $\phi(c_{k_n}+x)> m>2^{2n}$.
That finishes the inductive construction of $k_n$.

Let $D=\{c_{k_n}:n\in\omega\}$. If we show that  $\phi[\FS(D)]\in\I_{1/n}$, the proof of this case will be finished. Using the properties of $c_{k_n}$'s we can see that:
\begin{equation*}
    \begin{split}
\sum_{y\in \phi[\FS(D)]} \frac{1}{y+1} 
&=
\sum_{n\in\omega}\left(\frac{1}{\phi(c_{k_n})+1}+\sum_{x\in \FS(\{c_{k_i}:i<n\})}\frac{1}{\phi(c_{k_n}+x)+1}\right)
\\&\leq 
\sum_{n\in\omega}\left(\frac{1}{2^{2n}+1}+\sum_{x\in \FS(\{c_{k_i}:i<n\})}\frac{1}{2^{2n} +1}\right)
\\&\leq 
\sum_{n\in\omega}\left(\frac{1}{2^{2n}+1}+(2^n-1)\cdot \frac{1}{2^{2n} +1}\right)
<\infty.\qedhere 
\end{split}
\end{equation*}   
\end{proof}

\section{Summable ideal  is not below Ramsey ideal}

\begin{theorem}
$\I_{1/n} \not\leq_K \Ramsey$.
\end{theorem}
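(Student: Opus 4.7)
The plan is to mirror the proof of the preceding theorem for $\Hindman$, replacing the Canonical Hindman Theorem by the classical Erd\H{o}s--Rado Canonical Ramsey Theorem for pairs. Given an arbitrary $\phi:[\omega]^2\to\omega$, it suffices to exhibit an infinite $D\subseteq\omega$ with $\phi[[D]^2]\in\I_{1/n}$; this will show that $\phi$ cannot witness $\I_{1/n}\leq_K\Ramsey$.

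First I would apply the Canonical Ramsey Theorem to obtain an infinite $H=\{h_0<h_1<\cdots\}\subseteq\omega$ on which $\phi$ takes one of four canonical forms: (1) $\phi$ is constant on $[H]^2$; (2) for $i<j$, $\phi(\{h_i,h_j\})$ depends only on $i$; (3) for $i<j$, $\phi(\{h_i,h_j\})$ depends only on $j$; (4) $\phi$ is one-to-one on $[H]^2$. In case~(1) I would take $D=H$, so that $\phi[[H]^2]$ is a singleton and trivially summable. For cases~(2) and~(3), denote by $v_i$ (respectively $w_j$) the common $\phi$-value on pairs with minimum $h_i$ (resp.\ maximum $h_j$); the canonical form makes $i\mapsto v_i$ (resp.\ $j\mapsto w_j$) injective, so only finitely many of these values lie below any fixed bound. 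I would then inductively pick $k_0<k_1<\cdots$ with $v_{k_n}>2^n$ (resp.\ $w_{k_n}>2^n$) and set $D=\{h_{k_n}:n\in\omega\}$; the series $\sum_{y\in\phi[[D]^2]}1/(y+1)$ is then bounded above by $\sum_n 1/(2^n+1)<\infty$.

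The main work, paralleling Case~5 of the previous theorem, is case~(4), but even here the bookkeeping is simpler because no sumset structure must be tracked. Using the injectivity of $\phi$ on $[H]^2$, at stage $n$ (after $k_0<\cdots<k_{n-1}$ have been chosen) only finitely many indices $j>k_{n-1}$ can satisfy $\phi(\{h_{k_i},h_j\})\leq n2^n$ for some $i<n$, so a valid $k_n$ exists. With $D=\{h_{k_n}:n\in\omega\}$ this gives
$$\sum_{y\in\phi[[D]^2]}\frac{1}{y+1}=\sum_{m\geq 1}\sum_{n<m}\frac{1}{\phi(\{h_{k_n},h_{k_m}\})+1}\leq\sum_{m\geq 1}\frac{m}{m2^m+1}<\infty,$$
completing the argument. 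I do not anticipate a genuine obstacle beyond correctly invoking the Canonical Ramsey Theorem and performing the straightforward inductive choice in case~(4).
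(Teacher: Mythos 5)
Your proposal is correct and follows essentially the same approach as the paper's proof: both invoke the Canonical Ramsey Theorem to split into the four canonical cases and then, in the non-constant cases, inductively thin out the homogeneous set using the injectivity of the induced values and the bounds $2^n$ (cases 2--3) and $n2^n$ (case 4) to make the image summable.
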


\begin{proof}
Let $\phi:[\omega]^2\to\omega$ be an arbitrary function.
We will show that $\phi$ is not a witness for  $\I_{1/n}\leq_K \Ramsey$ i.e.~we will find an infinite set $H\subseteq\omega$ such that $\phi[[H]^2]\in\I_{1/n}$.
Using Canonical Ramsey Theorem (\cite[Theorem II]{MR37886}, see also \cite[Theorem~2 at p.~129]{MR1044995}), there is an infinite set $T\subseteq\omega$ such that 
one of  the following four cases holds:
\begin{enumerate}
    \item $\forall  x,y\in [T]^2 (\phi(x)=\phi(y))$, 
    \item $\forall  x,y\in [T]^2 (\phi(x)=\phi(y) \iff \min x=\min y)$, 
    \item $\forall  x,y\in [T]^2 (\phi(x)=\phi(y) \iff \max x=\max y)$,
    \item $\forall  x,y\in [T]^2 (\phi(x)=\phi(y) \iff x=y)$.
\end{enumerate}


\emph{Case 1.}
We take $H=T$ and see that the set $\phi[[H]^2]$ has only one element, so it belongs to $\I_{1/n}$.

\emph{Case 2.}
In this case, for every $t\in T$ the restriction
$\phi\restriction \{\{t,s\}: s\in T, s>t\}$
is constant with  distinct values  for distinct $t$.
Thus, for every $t\in T$ there is $k_t$ such that 
 $\{k_t\}=\phi[\{\{t,s\}:s\in T, s>t\}]$.
 
Since $k_{t_n}$ are pairwise distinct, we can find a one-to-one sequence $\{t_n:n\in\omega\}\subseteq T$
such that 
 $k_{t_n}>2^n$
for every $n\in\omega$.

Now, we take $H=\{t_n:n\in\omega\}$ and notice that 
$$
\sum_{k\in \phi[[H]^2]}\frac{1}{k+1} 
= 
\sum_{n=0}^\infty\left(\sum_{k\in \phi[\{\{t_n,t_i\}:i>n\}]}\frac{1}{k+1}\right) 
= 
\sum_{n=0}^\infty \frac{1}{k_{t_n}+1}
\leq 
\sum_{n=0}^\infty \frac{1}{ 2^n}
<\infty,
$$
so $\phi[[H]^2]\in \I_{1/n}$.

\emph{Case 3.}
In this case, for every $t\in T$ the restriction
$\phi\restriction \{\{s,t\}: s\in T, s<t\}$
is constant with  distinct values  for distinct $t$.
Thus, for every $t\in T$ there is $k_t$ such that 
 $\{k_t\}=\phi[\{\{t,s\}:s\in T, s<t\}]$.

Since $k_{t_n}$ are pairwise distinct, we can find a one-to-one sequence $\{t_n:n\in\omega\}\subseteq T$
such that 
 $k_{t_n}>2^n$
for every $n\in\omega$.

Now, we take $H=\{t_n:n\in\omega\}$ and notice that 
$$
\sum_{k\in \phi[[H]^2]}\frac{1}{k+1} 
= 
\sum_{n=0}^\infty\left(\sum_{k\in \phi[\{\{t_i,t_n\}:i<n\}]}\frac{1}{k+1}\right) 
= 
\sum_{n=0}^\infty \frac{1}{k_{t_n}+1}
\leq 
\sum_{n=0}^\infty \frac{1}{ 2^n}
<\infty,
$$
so $\phi[[H]^2]\in \I_{1/n}$.

\emph{Case 4.}
We construct inductively a one-to-one sequence $\{t_n:n\in\omega\}\subseteq T$
such that 
$\phi(\{t_i,t_n\})>n\cdot 2^n$
for every $n\in\omega$ and every $i<n$.

Suppose that $t_i$ are constructed for $i<n$.
Since there are only finitely many numbers below $n\cdot 2^n$ and the function $\phi$ is one-to-one on $[T]^2$ there is $t_n\in T\setminus \{t_i: i<n\}$ such that 
$\phi(\{t,t_n\})>n\cdot 2^n$ for every $t\in T$. That finishes the inductive construction of $t_n$.

Now, we take $H=\{t_n:n\in\omega\}$ and notice that 
$$
\sum_{k\in \phi[[H]^2]}\frac{1}{k+1} 
= 
\sum_{n=0}^\infty \sum_{i<n}\frac{1}{\phi(\{t_i,t_n\})+1}
\leq 
\sum_{n=0}^\infty \sum_{i<n}\frac{1}{n\cdot 2^n+1}
\leq
\sum_{n=0}^\infty \frac{1}{2^n}<\infty,
$$
so $\phi[[H]^2]\in \I_{1/n}$.
\end{proof}

\section{Hindman ideal is not below Ramsey ideal}

\begin{lemma}\  
\label{lem:VERY-SPARSE2}
If $D$ is very sparse, then 
$\{x\in \FS(D): \alpha_D(x)\cap\alpha_D(y)\neq\emptyset\}\in \Hindman$
for every $y\in \FS(D)$.
\end{lemma}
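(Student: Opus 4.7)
The goal is to show that for every $y\in\FS(D)$ the set
\[
A_y:=\{x\in\FS(D):\alpha_D(x)\cap\alpha_D(y)\neq\emptyset\}
\]
belongs to $\Hindman$. Unpacking the definition of $\Hindman$, this amounts to showing that no infinite $B\subseteq\omega$ satisfies $\FS(B)\subseteq A_y$. So my plan is to argue by contradiction: assume such an infinite $B$ exists, and derive a contradiction from the finiteness of $\alpha_D(y)$.

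First, since $A_y\subseteq\FS(D)$, the inclusion $\FS(B)\subseteq A_y$ forces $B\subseteq\FS(D)$ and, crucially, $b_1+b_2\in\FS(D)$ for every pair of distinct $b_1,b_2\in B$. The very sparseness of $D$ then yields $\alpha_D(b_1)\cap\alpha_D(b_2)=\emptyset$ for all distinct $b_1,b_2\in B$; in other words, the family $\{\alpha_D(b):b\in B\}$ consists of pairwise disjoint subsets of $D$. (Only pairwise disjointness is needed, so I will not have to reason about arbitrary finite sums and their $\alpha_D$-representations.)

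Second, observe that every singleton sum $b\in B$ itself lies in $\FS(B)\subseteq A_y$, hence $\alpha_D(b)\cap\alpha_D(y)\neq\emptyset$. Choose, for each $b\in B$, some element $d_b\in\alpha_D(b)\cap\alpha_D(y)$. By the pairwise disjointness established in the previous step, the assignment $b\mapsto d_b$ is injective, so it realizes an injection from the infinite set $B$ into the finite set $\alpha_D(y)$, which is the desired contradiction.

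I do not anticipate a serious obstacle: the proof is essentially a one-step pigeonhole argument. The only point requiring care is resisting the temptation to invoke very sparseness for arbitrary finite sums of elements of $B$ — the pairwise instance is all that is needed, and combining it with the trivial observation that singletons $b\in B$ already sit in $A_y$ makes $\alpha_D(y)$'s finiteness do the work.
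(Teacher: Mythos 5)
Your proof is correct and rests on the same two ingredients as the paper's: very sparseness applied to pairs, and the finiteness of $\alpha_D(y)$. The paper packages the argument slightly differently --- it decomposes $A_y$ as the finite union $\bigcup_{i\le n}\{x\in\FS(D):k_i\in\alpha_D(x)\}$ over $\alpha_D(y)=\{k_0,\dots,k_n\}$ and notes that each summand contains no $\FS(C)$ even for a two-element $C$, since any two of its elements share $k_i$ in their supports and hence cannot sum into $\FS(D)$ --- but this is just the dual packaging of your direct pigeonhole on a hypothetical infinite $B$ with $\FS(B)\subseteq A_y$, so the mathematical content is the same.
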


\begin{proof}
Let $(d_n)_{n\in \omega}$ be the increasing enumeration of all elements of $D$ and  $\alpha_D(y) = \{k_0,\dots,k_n\}$.
Since 
$$
\{x\in \FS(D): \alpha_D(x)\cap\alpha_D(y)\neq\emptyset\}
=
\bigcup_{i\leq n}\{x\in \FS(D): k_i\in \alpha_D(x)\},
$$
we only need to show that 
$\{x\in \FS(D): k_i\in \alpha_D(x)\}\in\Hindman$ for every $i\leq n$.

If $y,z\in \{x\in \FS(D): k_i\in \alpha_D(x)\}$, then 
$k_i\in \alpha_D(y)\cap \alpha_D(z)\neq\emptyset$, so $y+z\notin \FS(D)$ (since $D$ is very sparse). Thus, there is no infinite (even two-element) set $C$ such that $\FS(C)\subseteq \{x\in \FS(D): k_i\in \alpha_D(x)\}$.
\end{proof}

\begin{theorem}
\label{thm:Hindman-not-below-Ramsey}
    $\Hindman \not\leq_K \Ramsey$.
\end{theorem}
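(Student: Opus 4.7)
The plan follows the template of the paper's earlier proof that $\I_{1/n}\not\leq_K\Ramsey$. Given an arbitrary $\phi:[\omega]^2\to\omega$, I shall find an infinite $H\subseteq\omega$ such that $\phi[[H]^2]\in\Hindman$, which shows that $\phi$ does not witness $\Hindman\leq_K\Ramsey$. I apply the Canonical Ramsey Theorem of Erd\H{o}s--Rado to obtain an infinite $T\subseteq\omega$ on which $\phi$ realizes one of four canonical alternatives: (1)~constant; (2)~depending only on $\min$; (3)~depending only on $\max$; or (4)~one-to-one on $[T]^2$. Cases (1)--(3) will be handled in the same style as in the $\I_{1/n}$ theorem, while case (4) requires genuinely new work.

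Case (1) is immediate since $\phi[[T]^2]$ is a singleton. In cases (2) and (3) the image $\phi[[T]^2]$ is $\{k_t:t\in T\}$ with the $k_t$'s pairwise distinct. I would apply Lemma~\ref{lem:VERY-SPARSE} to extract an infinite very sparse $D\subseteq\{k_t:t\in T\}$ and set $H=\{t\in T:k_t\in D\}$; a short computation shows $\phi[[H]^2]\subseteq D$. The key auxiliary fact I need is that every sparse $D\subseteq\omega$ lies in $\Hindman$: if some infinite $B\subseteq D$ had $\FS(B)\subseteq D$, then for any distinct $b_0,b_1\in B$ the element $b_0+b_1\in D$ would admit two different $\alpha_D$-representations, namely $\{b_0,b_1\}$ and $\{b_0+b_1\}$, contradicting the uniqueness built into sparseness.

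Case (4) is the principal case. Here my plan is to build $H=\{h_n:n\in\omega\}\subseteq T$ inductively so that the image $V=\phi[[H]^2]$ is \emph{super-increasing}, i.e., each new element strictly exceeds the sum of those already in $V$. Such a set automatically lies in $\Hindman$: for any $v_i<v_j$ in $V$, the sum $v_i+v_j$ is strictly between $v_j$ and $v_{j+1}$, hence cannot equal any $v_k\in V$, so even two-element $\FS$-sums escape $V$. At step $n$, injectivity of $\phi$ on $[T]^2$ makes each map $t\mapsto\phi(\{h_i,t\})$ one-to-one for $i<n$; a preliminary pigeonhole-type refinement of $T$ will moreover arrange that each purely positive $\Z$-linear combination $t\mapsto\sum_{i\in I}\phi(\{h_i,t\})$ is injective on the refinement (it cannot be constant, since a sum of injective $\omega$-valued functions is unbounded). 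From there, the finitely many ``bad'' values of $h_n$ that would violate the super-increasing extension can be excluded.

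The main obstacle I expect in case (4) is handling mixed-sign combinations $t\mapsto\sum_{i\in I}\phi(\{h_i,t\})-\sum_{j\in J}\phi(\{h_j,t\})$ with both $I,J$ nonempty: these \emph{may} be constant on every infinite refinement, and such a constant value can coincide with a subset-difference of the image built so far, blocking the super-increasing extension. A natural workaround is to weaken the target and use Lemma~\ref{lem:VERY-SPARSE2}: rather than force super-increasingness directly, one constructs a very sparse $D\subseteq\omega$ alongside $H$ in such a way that $\phi[[H]^2]\subseteq\{x\in\FS(D):d_0\in\alpha_D(x)\}$ for some fixed $d_0\in D$, a set which by Lemma~\ref{lem:VERY-SPARSE2} lies in $\Hindman$. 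The joint flexibility in choosing $D$ and $H$ should be enough to absorb the constraints imposed by any constant difference functions.
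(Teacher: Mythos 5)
Your decomposition via the Canonical Ramsey Theorem is a genuinely different route from the paper's proof (which uses homogeneity of $\Hindman$ and a direct recursive construction of pairs $(b_n,B_n)$, never invoking canonical Ramsey). Your cases (1)--(3) are correct and pleasingly short: the auxiliary fact that every sparse $D$ lies in $\Hindman$ is right, since if $\FS(B)\subseteq D$ with $B\subseteq D$ infinite and $0\notin B$, then $b_0+b_1\in D$ would have the two distinct $D$-representations $\{b_0,b_1\}$ and $\{b_0+b_1\}$. So on $T$ with canonical pattern $\min$ or $\max$, pulling a sparse $D$ out of the range and setting $H=\{t\in T:k_t\in D\}$ does give $\phi[[H]^2]\subseteq D\in\Hindman$.

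The gap is case (4), and it is not a small one: after passing to an injective $\phi$ on $[T]^2$ you have, up to homogeneity of $\Hindman$ and a bijection $T\to\omega$, reduced to the statement ``for every injection $\phi:[\omega]^2\to\omega$ there is infinite $H$ with $\phi[[H]^2]\in\Hindman$,'' which is essentially the whole theorem. Your super-increasing plan fails exactly where you suspect: to append $h_n$, the $n$ new values $\phi(\{h_i,h_n\})$, $i<n$, must themselves form a super-increasing block, and even after a Ramsey pass fixing the order $\pi_{\sigma(0)}(t)<\dots<\pi_{\sigma(n-1)}(t)$ of the fibre maps $\pi_i(t)=\phi(\{h_i,t\})$, a relation such as $\pi_{\sigma(2)}(t)=\pi_{\sigma(1)}(t)+\pi_{\sigma(0)}(t)-5$ can hold on every infinite refinement while being perfectly compatible with injectivity of $\phi$ (choose $\pi_0,\pi_1$ with disjoint ranges, then let $\pi_2=\pi_0+\pi_1-5$). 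In that situation no choice of $h_n$ makes the new block super-increasing. Your proposed fallback --- constructing $D$ and $H$ jointly so that $\phi[[H]^2]\subseteq\{x\in\FS(D):d_0\in\alpha_D(x)\}$ --- is only stated as a hope (``should be enough''); as the target is a \emph{positive-density-in-}$\FS(D)$ copy of an IP-set shifted by $d_0$, it imposes the simultaneous number-theoretic constraints $\phi(\{h_i,h_j\})-d_0\in\FS(D\setminus\{d_0\})\cup\{0\}$ for all $i<j$, and nothing in the proposal shows these can be met for an arbitrary injection $\phi$. As written, the argument does not establish the theorem; it reduces the problem, modulo three easy cases, to the same problem. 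To compare, the paper's proof bypasses canonical Ramsey entirely and instead recursively builds $b_n\in B_n$ in the domain so that the $f$-images of the three pieces $X=[\{b_i:i\le n\}]^2$, $Y=\{\{b_i,b_k\}:i\le n<k\}$, $Z=[\{b_i:i>n\}]^2$ have controlled behaviour relative to a fixed very sparse $D$; the delicate part lives there, and your proposal has not supplied a substitute for it.
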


\begin{proof}
Let $D\subseteq\omega$ be a very sparse set (which exists by Lemma \ref{lem:VERY-SPARSE}).
Since the ideal $\Hindman$ is homogeneous (see Proposition \ref{prop:homogeneous}), it suffices to show that $\Hindman \restriction \FS(D)\not\leq_K\Ramsey$.

Assume to the contrary that 
there exists $f:[\omega]^2\to \FS(D)$ which witnesses 
$\Hindman \restriction \FS(D) \leq_K\Ramsey$.

We will recursively define infinite sets  $B_n \subseteq \omega$  and pairwise distinct elements $b_n\in \omega$ such that for all $n\in\omega$ the following conditions are satisfied:
\begin{enumerate}[(a)]
    \item $b_n\in B_n$, $b_{n+1}>b_n$,
    
    \item $B_{n+1}\subseteq B_{n}$, $B_{0}=\omega$,\label{thm:Ramsey-not-Hindman:item-1}

    \item
    for each $y \in f\left[[\{b_i: i<n\}]^2\right]$ we have\label{thm:Ramsey-not-Hindman:item-2}
    $$f[[B_n]^2]\cap \{x\in \FS(D):\ \alpha_D(x)\cap \alpha_D(y)\neq\emptyset\}=\emptyset,$$ 

    \item
    for each  $y \in f\left[[\{b_i: i< n\}]^2\right]$ and $i< n$ we have\label{thm:Ramsey-not-Hindman:item-3}
    $$f\left[\{\{b_i,b\}:b\in B_n\}\right] -y\in \Hindman.$$
\end{enumerate}

Let $b_0=0$ and $B_0=\omega$. Then $b_0$ and $B_0$ are as required.
Assume that $b_i$ and $B_i$ have been constructed for $i<n$ and satisfy items (\ref{thm:Ramsey-not-Hindman:item-1})--(\ref{thm:Ramsey-not-Hindman:item-3}).

Since 
$\{x\in \FS(D): \alpha_D(x)\cap \alpha_D(y)\neq\emptyset\}\in \Hindman$  for every $y\in f\left[[\{b_i: i<n\}]^2\right]\subseteq\FS(D)$ (by Lemma~\ref{lem:VERY-SPARSE2}),
$[B_{n-1}]^2\in\Ramsey^+$ and we assumed that $f$ witnesses $\Hindman\restriction \FS(D)\leq_K\Ramsey$, 
there exists an infinite set  $B \subseteq \omega$ such that 
$$[B]^2 \subseteq [B_{n-1}]^2\setminus \bigcup_{y\in f\left[[\{b_i: i< n\}]^2\right]} f^{-1}\left[\{x\in \FS(D):\ \alpha_D(x)\cap \alpha_D(y)\neq\emptyset\}\right].$$

Observe that for each infinite set $E\subseteq \omega$ and $b,y\in \omega$ there exists an infinite set $C\subseteq E$ such that $f[\{ \{ b,c\}: c\in C\}] -y\in \Hindman.$
Indeed, let $g:E\setminus \{b\}\to \omega$ be given by
$g(x) = f(\{b,x\}) - y $.
Since $\Hindman$ is a tall ideal (Proposition \ref{prop:tall}), $\Hindman\not\leq_K \fin(E\setminus\{b\})$. Thus, there is $C\notin \Fin(E\setminus \{b\})$ such that $C\subseteq E\setminus \{b\}$ and $g[C]= f[\{\{ b,c\}: c\in C\}] - y\in \Hindman$. 

Now, using recursively the above observation we can find an infinite set $C\subseteq B$ such that $f[\{ \{ b_i,c\}: c\in C\}]-y\in \Hindman$
for every $i<n$ and $y \in f\left[[\{b_i: i< n\}]^2\right]$.

We put  $B_n=C$ and pick any $b_n\in B_n$ with $b_n>b_{n-1}$.

The construction of the sequences $(B_n)_{n\in \omega}$ and $(b_n)_{n\in \omega}$ is finished.

Let  $B=\{b_n:\ n\in\omega\}$. 
Since $B$ is infinite, $[B]^2\in\Ramsey^+$. 
Since we assumed that $f$ witnesses $\Hindman\restriction \FS(D)\leq_K\Ramsey$, 
 $f[[B]^2]\in\Hindman^+\restriction\FS(D)$, and consequently  there exists an infinite set $C\subseteq\omega$ such  that  
 $\FS(C)\subseteq f[[B]^2]$. 

Pick any $c\in C$ and let  $j,n\in\omega$ be such that $c=f(\{b_{j},b_{n}\})$ and $j<n$.

Since $X = \left[\{b_i: i\leq n\}\right]^2$ is finite, 
$f[X] - c \in \Hindman $.

Let 
$Y = \{\{b_i,b_k\}: i\leq n <  k\}\}$. 
Since $\{b_k:k>n\}\subseteq B_{n+1}$ and $B_{n+1}$ satisfies  item~(\ref{thm:Ramsey-not-Hindman:item-3}) applied to $y=c$, we have 
$f\left[Y\right] - c \in \Hindman$.

Let 
$Z = [\{b_i: i>n\}]^2$. 
We claim that 
$\FS(C\setminus\{c\}) \cap \left(f\left[ Z \right] - c\right) = \emptyset$. 
Suppose to the contrary that there exists  $a\in \FS(C\setminus\{c\})\cap \left(f\left[ Z \right] - c\right) $.
Then $a+c \in \FS(C)\cap f[Z]\subseteq\FS(D)\cap f[[B_{n+1}]^2]$, so 
by item~(\ref{thm:Ramsey-not-Hindman:item-2}) applied to $y=c$,
$\alpha_D(c)\cap \alpha_D(a+c)=\emptyset$.
On the other hand, $a,c\in \FS(D)$, $D$ is very sparse and $a+c\in \FS(D)$, so $\alpha_D(a)\cap \alpha_D(c)=\emptyset$. Consequently, $\alpha_D(a+c) = \alpha_D(a)\cup \alpha_D(c)$, so 
$\alpha_D(c)\cap \alpha_D(a+c) = \alpha_D(c) \neq\emptyset$, a contradiction.

Since $[B]^2 = X\cup Y\cup Z$
and 
$\FS(C\setminus\{c\})\subseteq \FS(C) -c \subseteq f[[B]^2] -c$, we have
\begin{equation*}
    \begin{split}
\FS(C\setminus\{c\})
&\subseteq 
(f[X]-c) \cup (f[Y]-c) \cup ((f[Z]-c)\cap \FS(C\setminus\{c\}))
\\&=
(f[X]-c) \cup (f[Y]-c) \cup \emptyset \in\Hindman,
    \end{split}
\end{equation*}
a contradiction.
\end{proof}

\section{Ramsey  ideal is not below Hindman ideal}

\begin{theorem}
\label{thm:Ramsey-not-below-Hindman}
$\Ramsey \not\leq_K \Hindman$.
\end{theorem}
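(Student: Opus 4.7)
Given an arbitrary $f\colon\omega\to[\omega]^2$, the aim is to find an infinite $D\subseteq\omega$ with $f[\FS(D)]\in\Ramsey$, i.e., such that no $H\in[\omega]^\omega$ satisfies $[H]^2\subseteq f[\FS(D)]$. My strategy parallels the proof of Theorem~5.1: first apply the Canonical Hindman Theorem to $f$ to obtain an infinite sparse $C=\{c_n:n\in\omega\}$ on which $f|_{\FS(C)}$ takes one of the five canonical forms, and then construct $D\subseteq C$ (after further thinning if necessary) in each of the five cases.

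In the constant case, $f[\FS(C)]$ is a single edge, so $D=C$ works immediately. In the $\min$-case and the $\max$-case, $f[\FS(C)]=\{f(c_n):n\in\omega\}$ is an infinite family of pairwise distinct pairs, and the classical Erd\H{o}s dichotomy (every infinite graph contains either an infinite matching or an infinite star) yields an infinite subsequence $\{c_{n_k}\}$ whose $f$-image is a matching or star, both of which are triangle-free and hence belong to $\Ramsey$. In the injective case, I set $g=\min\circ f$ and $h=\max\circ f$ and apply Canonical Hindman successively to $g$ and to $h$ to obtain a common sparse $C'\subseteq C$ on which both are canonical. Since $f$ injective on $\FS(C')$ is equivalent to the pair $(g,h)$ being injective, and since distinct elements of $\FS(C')$ can share their $\min\alpha$ and $\max\alpha$, a case-by-case check rules out every combination in which $g$ and $h$ are both of type \textit{min}, \textit{max}, or \textit{min-and-max}; hence at least one of $g,h$ must be constant (then every $f$-image contains that constant value, so $f[\FS(C')]$ is a star) or itself injective on $\FS(C')$ (then any infinite clique $[H]^2\subseteq f[\FS(C')]$ would force $\min H = g(x)$ for infinitely many $x$, contradicting injectivity of $g$; the case of $h$ injective is symmetric).

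The subtle min-and-max case of $f$, where $f(x)=R(\min\alpha(x),\max\alpha(x))$ for some $R$ injective on $\{(m,M):m\le M\}$, is the main technical work. I apply the Canonical Ramsey Theorem to the induced $R'\colon[\omega]^2\to[\omega]^2$, $R'(\{i,j\})=R(c_i,c_j)$, obtaining an infinite $L\subseteq\omega$ with $R'|_{[L]^2}$ canonical. In the constant and $\min$/$\max$ subcases of Canonical Ramsey, a combination of Erd\H{o}s dichotomy and a secondary thinning to control the diagonal contributions $\{R(c_n,c_n):n\in L\}$ produces images that are matchings, stars, bipartite graphs, or finite unions of such, none containing $K_\omega$. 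The hardest step, which I expect to be the main obstacle, is the remaining injective subcase of Canonical Ramsey: here a ``labeling'' $R'(\{i,j\})=\{u_i,u_j\}$ a priori embeds an infinite complete subgraph inside $f[\FS(D)]$ for every $D\subseteq C$. I plan to resolve it by rerunning the injective-case tactic one level up --- applying Canonical Ramsey simultaneously to the components $\min R'$ and $\max R'$ and performing the same case-check --- to force one of them to be constant or injective on a further-thinned $[L']^2$, which then yields a star image or allows the same clique-elimination argument, giving $f[\FS(\{c_i:i\in L'\})]\in\Ramsey$.
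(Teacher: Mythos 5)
Your plan takes a genuinely different route from the paper: you canonicalize via the Canonical Hindman Theorem (and, inside one case, Canonical Ramsey), while the paper's proof builds the witnessing IP set directly, via a dichotomy on how the preimages of the columns of $\Ramsey$ (viewed on $\Gamma$) meet $\FS$-structures. Several of your cases do go through --- the observation that in the Hindman-injective case at least one of $g=\min\circ f$, $h=\max\circ f$ comes out constant or injective is correct, since two distinct elements of $\FS(C')$ can share both $\min\alpha$ and $\max\alpha$ --- but the $\min$-and-$\max$ case has a real gap, and it cannot be repaired within the ``choose $D\subseteq C$'' framework you set up.

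In that case $f(x)=R(\min\alpha(x),\max\alpha(x))$ on $\FS(C)$ with $R$ injective, you pass to $R'(\{i,j\})=R(c_i,c_j)$ on $[\omega]^2$, and you propose to apply Canonical Ramsey to $\min R'$ and $\max R'$ and assert that a case-check forces one of them to be constant or injective. It does not: the combination ``$\min R'$ of $\min$ type, $\max R'$ of $\max$ type'' is fully consistent with $R'$ being injective --- the trick you used in the Hindman-injective case has no analogue on $[\omega]^2$, where a pair is determined by its minimum and maximum --- and this combination is exactly the one that produces cliques surviving every thinning. Concretely, take $C=\{2^n:n\in\omega\}$ (so $\FS(C)=\omega\setminus\{0\}$), set $f(2^{n_1}+\cdots+2^{n_k})=\{2n_1,2n_k\}$ for $n_1<\cdots<n_k$, $k\geq 2$, and $f(2^n)=\{2n,2n+1\}$. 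This $f$ has $\min$-and-$\max$ type on $\FS(C)$ with $R'(\{i,j\})=\{2i,2j\}$, and for every infinite $L\subseteq\omega$ the image $f[\FS(\{2^i:i\in L\})]$ contains the infinite clique $[\{2i:i\in L\}]^2$, so no $D\subseteq C$ can work. A good $D$ does exist for this $f$ --- e.g.\ $D=\{3\cdot 4^n:n\in\omega\}$ yields $f[\FS(D)]\subseteq\{\{4m,4M+2\}:m\leq M\}$, a bipartite set, hence in $\Ramsey$ --- but to find it you must give each element of $D$ several $\alpha$-digits, stepping outside any $D\subseteq C$. That extra idea is genuinely new and is, in essence, why the paper's Case~1/Case~2 argument constructs its IP set from scratch rather than by canonicalization.
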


\begin{proof}
By $\Gamma$ we will denote the set $\Gamma=\{(z_0,z_1)\in \omega^2:z_0>z_1\}$. In this proof we will view $\Ramsey$ as an ideal on $\Gamma$ consisting of those $A\subseteq\Gamma$ that do not contain any $B^2\cap\Gamma$, for infinite $B\subseteq\omega$.

By Lemma \ref{lem:VERY-SPARSE}, there is a very sparse $X\in[\omega]^\omega$. 
The ideal $\cH$ is homogeneous (see Proposition \ref{prop:homogeneous}), so it suffices to show that $\cR\not\leq_K\cH \restriction \FS(X)$. Fix any $f:\FS(X)\to \Gamma$ and assume to the contrary that it witnesses $\cR\leq_K\cH$. There are two possible cases.

\begin{case}
There are $k\in\omega$ and very sparse $D\in[\omega]^\omega$, $\FS(D)\subseteq \FS(X)$, such that for all $n>k$ and $x\in \FS(D)$ we have: 
$(f^{-1}[(\omega\times\{n\})\cap\Gamma]\cap\{y\in FS(D):\alpha_D(x)\subseteq \alpha_D(y)\})-x\in\cH\restriction \FS(X).$
\end{case}
In this case we recursively pick $\{x_n: n\in\omega\}\subseteq \FS(D)$ and $\{D_n : n\in\omega\cup\{-1\}\}\subseteq[\omega]^\omega$ such that $D_{-1}=D$ and for all $n\in\omega$ we have:
\begin{itemize}
    \item[(a)] $x_n\in \FS(D_{n-1})\setminus \big(\{x_i: i<n\}\cup \bigcup_{i<n}\bigcup_{j<n} \{y\in \FS(D_j): \alpha_{D_j}(y)\cap \alpha_{D_j}(x_i)\neq \emptyset\}\big)$ (here we put $\alpha_{D_j}(x_i)=\emptyset$ whenever $x_i\notin D_j$);
    \item[(b)] $D_n$ is very sparse;
    \item[(c)] $\FS(\{x_0,\ldots,x_n\})\subseteq \FS(D)$;
    \item[(d)] $\FS(D_n)\subseteq \FS(D_{n-1})\subseteq \FS(D)$;
    \item[(e)] $\left(f^{-1}[(\omega\times\{k+i\})\cap\Gamma]-x\right)\cap \FS(D_n)=\emptyset$ for every  $x\in \FS(\{x_0,\ldots,x_n\})$ and $1\leq i\leq n+1$;
    \item[(f)] $f^{-1}[(\omega\times\{k+i\})\cap\Gamma]\cap \FS(D_n)=\emptyset$ for all $1\leq i\leq n+1$.
\end{itemize}

The initial step of the construction is given by the requirement $D_{-1}=D$. Suppose now that  $x_i$ and $D_i$ for all $i<n$ are defined. 

Find $x_n\in \FS(D_{n-1})$ such that $\FS(\{x_0,\ldots,x_n\})\subseteq \FS(D)$ and $x_n\neq x_i$ for all $i<n$. This is possible since it suffices to pick any point from the set $$\FS(D_{n-1})\setminus \bigcup_{i<n}\bigcup_{j<n} \{y\in \FS(D_j): \alpha_{D_j}(y)\cap \alpha_{D_j}(x_i)\neq \emptyset\},$$ which is nonempty as $\FS(D_{n-1})\notin\cH \restriction \FS(X)$ and $$\bigcup_{i<n}\bigcup_{j<n} \{y\in \FS(D_j): \alpha_{D_j}(y)\cap \alpha_{D_j}(x_i)\neq \emptyset\}\in\cH\restriction \FS(X)$$
by Lemma \ref{lem:VERY-SPARSE2} and item (b) for all $j<n$ (here we put $\alpha_{D_j}(x_i)=\emptyset$ whenever $x_i\notin D_j$).

Enumerate $\FS(\{x_0,\ldots,x_n\})=\{c_0,c_1,\ldots,c_{2^{n+1}-2}\}$. We will define sets $E_t\in[\omega]^\omega$ for $-1\leq t\leq n$ such that $E_{-1}=D_{n-1}$ and for all $0\leq t\leq n$:
\begin{itemize}
    \item $\FS(E_{t})\subseteq \FS(E_{t-1})\subseteq \FS(D_{n-1})$,
    \item $\left(\bigcup_{1\leq i\leq n+1}f^{-1}[(\omega\times\{k+i\})\cap\Gamma]-c_l\right)\cap \FS(E_t)=\emptyset$ for every  $0\leq l\leq 2^{n+1}-2$.
\end{itemize}
Such construction is possible. Indeed, since we are on Case 1 and each $c_l\in \FS(\{x_0,\ldots,x_n\})\subseteq \FS(D)$, we know that:
\begin{equation*}
    \begin{split}
&\left(\bigcup_{1\leq i\leq n+1}f^{-1}[(\omega\times\{k+i\})\cap\Gamma]-c_l\right)
\cap\left(\{y\in \FS(D): \alpha_D(c_l)\subseteq \alpha_D(y)\}-c_l\right)
\\=&
\left(\bigcup_{1\leq i\leq n+1}f^{-1}[(\omega\times\{k+i\})\cap\Gamma]
\cap\{y\in \FS(D): \alpha_D(c_l)\subseteq \alpha_D(y)\}\right)-c_l
\in
\cH\restriction \FS(D).
\end{split}
\end{equation*}
On the other hand, we get:
\begin{equation*}
    \begin{split}
&
\FS(E_{t-1})\cap\left(\{y\in \FS(D):\alpha_D(c_l)\subseteq\alpha_D(y)\}-c_l\right)
\\
\supseteq &
\FS(E_{t-1})\cap\left(\FS(D)\setminus \{y\in \FS(D): \alpha_D(c_l)\cap \alpha_D(y)\neq\emptyset\}\right)
\\
\supseteq &
 \FS(E_{t-1})\setminus \{y\in \FS(D): \alpha_D(c_l)\cap\alpha_D(y)\neq\emptyset\}\notin\cH\restriction \FS(D),
\end{split}
\end{equation*}
as $\{y\in \FS(D):\alpha_D(c_l)\cap \alpha_D(y)\neq\emptyset\}\in\cH\restriction \FS(D)$ (by Lemma \ref{lem:VERY-SPARSE2}). Then 
\begin{equation*}
    \begin{split}
&\FS(E_{t-1})\setminus \left(\bigcup_{1\leq i\leq n+1}f^{-1}[
(\omega\times\{k+i\})\cap\Gamma]-c_l\right)
\\
\supseteq &
 \left(\FS(E_{t-1})\cap \left(\{y\in \FS(D): \alpha_D(c_l)\subseteq \alpha_D(y)\}-c_l\right)\right)
 \\ &
 \setminus   
 \left(\left(\bigcup_{1\leq i\leq n+1}f^{-1}[(\omega\times\{k+i\})\cap\Gamma]-c_l\right)\right.
\\ &
\cap 
 \left.\left(\{y\in \FS(D):\alpha_D(c_l)\subseteq \alpha_D(y)\}-c_l\right)\right)\notin\cH\restriction \FS(D).
 \end{split}
 \end{equation*}
 Thus, there is $E_t\in[\omega]^\omega$ as needed. 

Once all $E_t$ are defined, observe that
$$\bigcup_{1\leq i\leq n+1}(\omega\times\{k+i\})\cap\Gamma\in \cR.$$
Since we assumed that $f$ witnesses $\cR\leq_K\cH$, 
$$\FS(E_n)\setminus \bigcup_{1\leq i\leq n+1}f^{-1}[(\omega\times\{k+i\})\cap\Gamma]\notin \cH.$$ 
Hence, there is a very sparse $D_n\in [\omega]^{\omega}$ such that $$\FS(D_n)\subseteq \FS(E_n)\setminus \bigcup_{1\leq i\leq n+1}f^{-1}[(\omega\times\{k+i\})\cap\Gamma]$$
(by Lemma \ref{lem:VERY-SPARSE}).
Note that $\FS(D_n)\subseteq \FS(E_n)\subseteq \FS(D_{n-1})\subseteq \FS(D)$ and 
$$\bigcup_{1\leq i\leq n+1}\left(f^{-1}[(\omega\times\{k+i\})\cap\Gamma]-x\right)\cap \FS(D_n)=\emptyset$$ 
for all $x\in \FS(\{x_0,\ldots,x_n\})$.

This finishes the construction of $\{x_n: n\in\omega\}\subseteq \FS(D)$ and $\{D_n:n\in\omega\cup\{-1\}\}\subseteq[\omega]^\omega$.

Define $B=\FS(\{x_n:\ n\in\omega\})$. Obviously, $B\notin\cH\restriction \FS(X)$ as $\FS(\{x_0,\ldots,x_n\})\subseteq \FS(D)\subseteq \FS(X)$ for all $n\in\omega$. We will show that $f[B]\cap (\omega\times\{n\})\cap\Gamma$ is finite for all $n>k$. This will finish the proof in this case as $\bigcup_{n\leq k}(\omega\times\{n\})\cap\Gamma\in\cR$ and any set finite on each $(\omega\times\{n\})\cap\Gamma$ belongs to $\cR$.

Assume that $f(x)\in (\omega\times\{k+m+1\})\cap\Gamma$ for some $m\in\omega$ and $x=x_{n_0}+\ldots+x_{n_t}\in B$, where $n_0<\ldots<n_t$. If $n_0>m$, then $x\in \FS(\{x_n:\ n>m\})\subseteq \FS(D_m)$ which contradicts $f(x)\in (\omega\times\{k+m+1\})\cap\Gamma$ (by item (f)). If $n_0\leq m$ but $J=\{j\leq t: n_j>m\}\neq\emptyset$, then let $j=\min J$ and note that $x\in \sum_{i<j}x_{n_i}+\FS(D_m)$. As $\sum_{i<j}x_{n_i}\in \FS(\{x_0,\ldots,x_m\})$, item (e) gives us a contradiction with $f(x)\in (\omega\times\{k+m+1\})\cap\Gamma$. Hence, the only possibility is that $n_j\leq m$ for all $j\leq t$. Thus, $f[B]\cap(\omega\times\{k+m+1\})\cap\Gamma\subseteq f[\FS(\{x_0,\ldots,x_m\})]$, which is a finite set.

\begin{case}
For every $k\in\omega$ and very sparse $D\in[\omega]^\omega$, $\FS(D)\subseteq \FS(X)$, there are $n>k$ and $x\in \FS(D)$ such that: $$(f^{-1}[(\omega\times\{n\})\cap\Gamma]\cap\{y\in \FS(D): \alpha_D(x)\subseteq \alpha_D(y)\})-x\notin\cH\restriction \FS(X).$$
\end{case}
In this case we will pick $\{n_i: i\in \omega\}\subseteq\omega$, $\{j_i: i\in \omega\}\subseteq\{0,1\}$, $\{x_i: i\in \omega\}\subseteq \FS(D)$, $\{D_i: i\in \omega\cup\{-1\}\}\subseteq[\omega]^\omega$, $\{k_i: i\in \omega\}\subseteq\omega\cup\{-1\}$ and $\{F_i:i\in \omega\}\subseteq\fin$ such that $D_{-1}=X$ and for each $i\in\omega$:
\begin{itemize}
    \item[(a)] \begin{itemize}
    \item[(a1)] $n_i>n_{i-1}$ (here we put $n_{-1}=-1$);
    \item[(a2)] $n_{i}>\min\{a\in\omega: f[\FS(\{x_j: j<i\})]\subseteq \{0,1,\ldots,a\}^2\cap\Gamma\}$;
    \end{itemize} 
    \item[(b)] 
    \begin{itemize}
    \item[(b1)] $\FS(D_i)\subseteq \FS(D_{i-1})\subseteq \FS(X)$;
    \item[(b2)] $D_i$ is very sparse;
    \end{itemize}
    \item[(c)] if $j_i=0$, then:
    \begin{itemize}
        \item[(c1)] $k_i=-1$;
        \item[(c2)] $F_i=\emptyset$;
        \item[(c3)] $x_i\in \FS(D_{i-1})\cap f^{-1}[(\omega \times\{n_i\})\cap\Gamma]$;
        \item[(c4)] $x_i+\FS(D_i)\subseteq f^{-1}[(\omega\times\{n_i\})\cap\Gamma]$;
    \end{itemize}
    \item[(d)] if $j_i=1$, then:
    \begin{itemize}
        \item[(d1)] $k_i\in\{0\leq u<i: u\notin\bigcup_{q<i}F_q,j_u=0\}$; 
        \item[(d2)] $F_i=\{k_i,k_i+1,\ldots,i-1\}$;
        \item[(d3)] \begin{itemize}
            \item [(d3a)] $x_i\in f^{-1}[\{(n_i,n_{k_i})\}]$;
            \item [(d3b)] $x_i\in x_{k_i}+(\{0\}\cup\FS(\{x_r: k_i<r<i, r\notin\bigcup_{q<i}F_q\}))+\FS(D_{i-1})$;
        \end{itemize} 
        \item[(d4)] $x_i+\FS(D_i)\subseteq f^{-1}[\{(n_i,n_{k_i})\}]$;
    \end{itemize}
    \item[(e)] if $x=\sum_{b\leq a}x_{t_b}$ for some $0\leq t_0<\ldots<t_a<i$, $t_b\notin\bigcup_{q\leq i}F_q$ (so $x\in \FS(\{x_t: t<i,\ t\notin\bigcup_{q\leq i}F_q\})$), then: 
    \begin{itemize}
        \item[(e1)] $(x+x_i+\FS(D_i))\cap f^{-1}[\{(n_i,n_{t_0})\}]=\emptyset$;
        \item[(e2)] $(x+\FS(D_i))\cap f^{-1}[\{(n_i,n_{t_0})\}]=\emptyset$;
        \item[(e3)] $(x+x_i)\cap f^{-1}[\{(n_i,n_{t_0})\}]=\emptyset$;
        \end{itemize}
    \item[(f)] $\FS(D_i)\cap\{y\in \FS(D_{t}): \alpha_{D_{t}}(y)\cap \alpha_{D_{t}}(x_u)\neq\emptyset\}=\emptyset$ for all $-1\leq t<i$ and $0\leq u\leq i$ such that $x_u\in\FS(D_t)$;
    \item[(g)] \begin{itemize}
        \item[(g1)] $\FS(\{x_t: t\leq i, t\notin\bigcup_{q\leq i}F_q\})\subseteq \FS(X)$;
        \item[(g2)] $\sum_{b\leq a}x_{t_b}\in x_{t_{0}}+\FS(D_{t_0})$ for every $a>0$, $0\leq t_0<\ldots<t_a\leq i$, $t_b\notin\bigcup_{q\leq i}F_q$;
        \end{itemize}
\end{itemize}

At first step, since we are in Case 2, for $k=0$ and $D=X$ there are $n_0>k$ (note that (a) is satisfied) and $x'_0\in \FS(X)$ such that: 
$(f^{-1}[(\omega\times\{n_0\})\cap\Gamma]\cap\{y\in \FS(X): \alpha_X(x'_0)\subseteq \alpha_X(y)\})-x'_0\notin\cH\restriction \FS(X).$
Hence, there is $D'_0\in[\omega]^\omega$ such that: $x'_0+\FS(D'_0)\subseteq f^{-1}[(\omega\times\{n_0\})\cap\Gamma]\cap\{y\in \FS(X): \alpha_X(x'_0)\subseteq \alpha_X(y)\}\subseteq \FS(X).$
Put $j_0=0$, $k_0=-1$ and $F_0=\emptyset$ (note that (c1) and (c2) are satisfied). Moreover, define $x_0=x'_0+\min(D'_0)$ (note that (c3) and (g1) are satisfied, because $x_0\in x'_0+\FS(D'_0)\subseteq\FS(X)$ and $x'_0+\FS(D'_0)\subseteq f^{-1}[(\omega\times\{n_0\})\cap\Gamma]$) and using Lemma \ref{lem:VERY-SPARSE} find a very sparse $D_0\in[\omega]^\omega$ such that $$\FS(D_0)\subseteq \FS(D'_0\setminus\{\min(D'_0)\})\setminus \{y\in \FS(X): \alpha_X(y)\cap \alpha_X(x_0)\neq\emptyset\},$$ which is possible as $\{y\in \FS(X):\alpha_X(y)\cap \alpha_X(x_0)\neq\emptyset\}\in\cH\restriction \FS(X)$ by Lemma \ref{lem:VERY-SPARSE2} (note that (c4), (f) and (b) are satisfied, because $x_0+\FS(D_0)\subseteq x'_0+\FS(D'_0)\subseteq f^{-1}[(\omega\times\{n_0\})\cap\Gamma]$ and $\FS(D_0)\subseteq \FS(D'_0)\subseteq \{y\in \FS(X):\alpha_X(x'_0)\subseteq \alpha_X(y)\}-x'_0\subseteq \FS(X)$). In  conditions (e) and (g2) there is nothing to check. Thus, all the requirements are met.

At $i$th step, where $i>0$, since we are in Case 2, if $k=\max\{n_{i-1},\min\{a\in\omega: f[\FS(\{x_j: j<i\})]\subseteq \{0,1,\ldots,a\}^2\cap\Gamma\}\}$ 
and $D=D_{i-1}$, then there are $n_i>k$ (so (a) is satisfied) and $x'_i\in \FS(D_{i-1})$ such that $$\left(f^{-1}[(\omega\times\{n_i\})\cap\Gamma]\cap\{y\in \FS(D_{i-1}):\alpha_{D_{i-1}}(x_i')\subseteq \alpha_{D_{i-1}}(y)\}\right)-x'_i\notin\cH\restriction \FS(X).$$
Hence, there is $D'_i\in[\omega]^\omega$ such that: $x'_i+\FS(D'_i)\subseteq f^{-1}[(\omega\times\{n_i\})\cap\Gamma]\cap\{y\in \FS(D_{i-1}):\alpha_{D_{i-1}}(x_i')\subseteq \alpha_{D_{i-1}}(y)\}.$
In particular, $\FS(D'_i)\subseteq \{y\in \FS(D_{i-1}):\alpha_{D_{i-1}}(x_i')\subseteq \alpha_{D_{i-1}}(y)\}-x'_i= \{y\in \FS(D_{i-1}):\alpha_{D_{i-1}}(x_i')\cap\alpha_{D_{i-1}}(y)=\emptyset\}\subseteq\FS(D_{i-1})$. There are two possibilities. 

Assume first that there is $x=\sum_{b\leq a}x_{t_b}$ for some $t_0<\ldots<t_a<i$, $t_b\notin\bigcup_{q<i}F_q$ such that either $x+x'_i+\FS(\bar{D}_i)\subseteq f^{-1}[\{(n_i,n_{t_0})\}]$ or $x+\FS(\bar{D}_i)\subseteq f^{-1}[\{(n_i,n_{t_0})\}]$ for some $\bar{D}_i\in[\omega]^\omega$ such that $\FS(\bar{D}_i)\subseteq \FS(D'_i)$. Define $j_i=1$ and let $k_i$ be minimal such that there is (one or more) $x$ as above with $k_i=t_0$. 

Notice that $k_i\in\{0\leq u<i:\ u\notin\bigcup_{q<i}F_q\}$. We will show that $j_{k_i}=0$ (i.e., (d1) is satisfied). Suppose that $j_{k_i}=j_{t_0}=1$. Observe that $x'_i+\FS(\bar{D}_i)\subseteq \FS(D_{i-1})$ (by $\FS(\bar{D}_i)\cap\{y\in \FS(D_{i-1}):\alpha_{D_{i-1}}(y)\cap \alpha_{D_{i-1}}(x_i')\neq\emptyset\}=\emptyset$) and consequently $x'_i+\FS(\bar{D}_i)\subseteq \FS(D_{t_0})$ (by item (b1)). Then items (b1), (f) and (g2) give us: 
\begin{itemize}
    \item $x+x'_i+\FS(\bar{D}_i)\subseteq x_{t_0}+\FS(D_{t_0})$,
    \item $x+\FS(\bar{D}_i)\subseteq x_{t_0}+\FS(D_{t_0})$.
\end{itemize}
Then from (d4) we have:
\begin{itemize}
    \item $f[x+x'_i+\FS(\bar{D}_i)]\subseteq\{(n_{t_0},n_{k_{t_0}})\}$,
    \item $f[x+\FS(\bar{D}_i)]\subseteq\{(n_{t_0},n_{k_{t_0}})\}$.
\end{itemize}
This contradicts $x+x'_i+\FS(\bar{D}_i)\subseteq f^{-1}[\{(n_i,n_{t_0})\}]$ or $x+\FS(\bar{D}_i)\subseteq f^{-1}[\{(n_i,n_{t_0})\}]$, because $n_{t_0}<n_i$ (by $t_0<i$ and item (a1)).

Define $F_i=\{k_i,k_i+1,\ldots,i-1\}$ (so (d2) is satisfied) and $\bar{x}_i=x+x'_i$ (or $\bar{x}_i=x$ if $x+\FS(\bar{D}_i)\subseteq f^{-1}[\{(n_i,n_{t_0})\}]$). 

To define $D_i$ and $x_i$, note that by the choice of $k_i$, for each $y=\sum_{b\leq a}x_{t_b}$, $t_0<\ldots<t_a<i$, $t_b\notin\bigcup_{q\leq i}F_q$ (so in fact $t_a<k_i$) we know that $(y+\bar{x}_i+\FS(E))\not\subseteq f^{-1}[\{(n_i,n_{t_0})\}]$ and $(y+\FS(E))\not\subseteq f^{-1}[\{(n_i,n_{t_0})\}]$ for every $E\in[\omega]^\omega$ such that $\FS(E)\subseteq \FS(D'_i)$. In other words, $f^{-1}[\{(n_i,n_{t_0})\}]-(y+\bar{x}_i)\in\cH\restriction \FS(D'_i)$ and $f^{-1}[\{(n_i,n_{t_0})\}]-y\in\cH\restriction \FS(D'_i)$, for every such $y$. Thus, we can find $\tilde{D}_i\in[\omega]^\omega$ such that:
\begin{itemize}
    \item $\FS(\tilde{D}_i)\subseteq \FS(\bar{D}_i)$;
    \item $(y+\bar{x}_i+\FS(\tilde{D}_i))\cap f^{-1}[\{(n_i,n_{t_0})\}]=\emptyset$ and $(y+\FS(\tilde{D}_i))\cap f^{-1}[\{(n_i,n_{t_0})\}]=\emptyset$ for every $y=\sum_{b\leq a}x_{t_b}$, where $t_0<\ldots<t_a<i$, $t_b\notin\bigcup_{q\leq i}F_q$;
    \item $\FS(\tilde{D}_i)\cap\{y\in \FS(D_{i-1}):\alpha_{D_{i-1}}(y)\cap \alpha_{D_{i-1}}(x_i')\neq\emptyset\}=\emptyset$;
\end{itemize}
(the last item is trivial, as $\FS(\tilde{D}_i)\subseteq \FS(\bar{D}_i)\subseteq\FS(D'_i)$ and $\FS(D'_i)\subseteq\{y\in \FS(D_{i-1}):\alpha_{D_{i-1}}(x_i')\cap\alpha_{D_{i-1}}(y)=\emptyset\}$).

Define $x_i=\bar{x}_i+\min(\tilde{D}_i)$ and let $D_i\in[\omega]^\omega$ be very sparse such that $\FS(D_i)\subseteq \FS(\tilde{D}_i\setminus\{\min(\tilde{D}_i)\})$ and $D_i$ satisfies item (f). It is possible using Lemma \ref{lem:VERY-SPARSE}, as $\{y\in \FS(D_{t}):\alpha_{D_{t}}(y)\cap \alpha_{D_{t}}(x_u)\neq\emptyset\}\in\cH\restriction \FS(X)$ by Lemma \ref{lem:VERY-SPARSE2} and item (b2) for all $-1\leq t<i$. Then (b2) is satisfied. Observe that other conditions are met:
\begin{itemize}
    \item [(b1)] $\FS(D_i)\subseteq \FS(\tilde{D}_i)\subseteq \FS(\bar{D}_i)\subseteq \FS(D'_i)\subseteq \FS(D_{i-1})\subseteq \FS(X)$;
    \item [(d3b)] if $\bar{x}_i=x+x'_i$, then   
    $x_i=\bar{x}_i+\min(\tilde{D}_i)=x_{k_i}+(x-x_{k_i})+x'_i+\min(\tilde{D}_i)\in x_{k_i}+(\{0\}\cup\FS(\{x_r:\ k_i<r<i, r\notin\bigcup_{q<i}F_q\}))+\FS(D_{i-1})$ by the fact that $\FS(\tilde{D}_i)\cap\{y\in \FS(D_{i-1}):\alpha_{D_{i-1}}(y)\cap \alpha_{D_{i-1}}(x_i')\neq\emptyset\}=\emptyset$ (if $\bar{x}_i=x$ this is even easier to show);
    \item [(d3a)] if $\bar{x}_i=x+x'_i$, then $x_i\in x+x'_i+\FS(\bar{D}_i)\subseteq f^{-1}[\{(n_i,n_{k_i})\}]$ (if $\bar{x}_i=x$ this is also true);
    \item [(d4)] if $\bar{x}_i=x+x'_i$, then   $x_i+\FS(D_i)\subseteq x+x'_i+\FS(\bar{D}_i)\subseteq f^{-1}[\{(n_i,n_{k_i})\}]$ (if $\bar{x}_i=x$ this is also true);
    \item [(e)] for (e3), if $y=\sum_{b\leq a}x_{t_b}$, $t_0<\ldots<t_a<i$, $t_b\notin\bigcup_{q\leq i}F_q$, then note that $y+x_i=y+\bar{x}_i+\min(\tilde{D}_i)\in y+\bar{x}_i+\FS(\tilde{D}_i)$ and recall that $(y+\bar{x}_i+\FS(\tilde{D}_i))\cap f^{-1}[\{(n_i,n_{t_0})\}]=\emptyset$, thus $y+x_i\notin f^{-1}[\{(n_i,n_{t_0})\}]$ ((e1) and (e2) are similar);
    \item [(g1)] $\FS(\{x_t: t\leq i, t\notin\bigcup_{q\leq i}F_q\})\subseteq\FS(\{x_t: t<i, t\notin\bigcup_{q<i}F_q\})\cup(x_i+\FS(\{x_t: t<i, t\notin\bigcup_{q<i}F_q\})\subseteq \FS(X)$ by items (f) and (g1) applied to $i-1$ and item (d3b) applied to $i$;
    \item [(g2)] if $a>0$, $t_{0}<\ldots<t_{a}\leq i$, $t_b\notin\bigcup_{q\leq i}F_q$ then either $t_a<i$ and $\sum_{b\leq a}x_{t_b}\in x_{t_0}+\FS(D_{t_0})$ (by (g2) applied to $i-1$) or $t_a=i$ and $\sum_{b\leq a}x_{t_b}=\sum_{b<a}x_{t_b}+x_i\in x_{t_0}+(\{0\}\cup \FS(\{x_r: t_0<r<k_i, r\notin\bigcup_{q<i}F_q\}))+x_{k_i}+(\{0\}\cup \FS(\{x_r: k_i<r<i, r\notin\bigcup_{q<i}F_q\}))+\FS(D_{i-1})\subseteq x_{t_0}+\FS(D_{t_0})$ by items (b1), (d3b), (f) and (g2) for $i-1$.
\end{itemize}
Hence, all the requirements are met. This finishes the case of $j_i=1$.

Assume now that for all $x=\sum_{b\leq a}x_{t_b}$, $t_0<\ldots<t_a<i$, $t_b\notin\bigcup_{q<i}F_q$ we have $x+x'_i+\FS(E)\not\subseteq f^{-1}[\{(n_i,n_{t_0})\}]$ and $x+\FS(E)\not\subseteq f^{-1}[\{(n_i,n_{t_0})\}]$ for all $E\in[\omega]^\omega$ such that $\FS(E)\subseteq \FS(D'_i)$. Put $j_i=0$, $k_i=-1$ and $F_i=\emptyset$ (note that (c1) and (c2) are satisfied). 

Similarly as above (in the construction of $\tilde{D}_i$), we can find $\tilde{D}_i\in[\omega]^\omega$ such that:
\begin{itemize}
    \item $\FS(\tilde{D}_i)\subseteq \FS(D'_i)$;
    \item $(x+x'_i+\FS(\tilde{D}_i))\cap f^{-1}[\{(n_i,n_{t_0})\}]=\emptyset$ and $(x+\FS(\tilde{D}_i))\cap f^{-1}[\{(n_i,n_{t_0})\}]=\emptyset$ for every $x=\sum_{b\leq a}x_{t_b}$, $t_0<\ldots<t_a<i$, $t_b\notin\bigcup_{q\leq i}F_q$;
    \item $\FS(\tilde{D}_i)\cap\{y\in \FS(D_{i-1}):\alpha_{D_{i-1}}(y)\cap \alpha_{D_{i-1}}(x_i')\neq\emptyset\}=\emptyset$.
\end{itemize}

Define $x_i=x'_i+\min(\tilde{D}_i)$ and let $D_i\in[\omega]^\omega$ be very sparse such that $\FS(D_i)\subseteq \FS(\tilde{D}_i\setminus\{\min(\tilde{D}_i)\})$ and $D_i$ satisfies item (f) (which is possible by Lemmas \ref{lem:VERY-SPARSE} and \ref{lem:VERY-SPARSE2} and (b2) applied to all $-1\leq t<i$). Note that (b2) is satisfied. Observe that other conditions are met:
\begin{itemize}
    \item [(b1)] $\FS(D_i)\subseteq \FS(\tilde{D}_i)\subseteq \FS(D'_i)\subseteq \FS(D_{i-1})\subseteq \FS(X)$;
    \item [(c3)] $x_i\in \FS(D_{i-1})$ as $\FS(\tilde{D}_i)\cap\{y\in \FS(D_{i-1}):\alpha_{D_{i-1}}(y)\cap \alpha_{D_{i-1}}(x_i')\neq\emptyset\}=\emptyset$, $x_i\in x'_i+\FS(D'_i)\subseteq f^{-1}[(\omega \times\{n_i\})\cap\Gamma]$;
    \item [(c4)] $x_i+\FS(D_i)\subseteq x'_i+\FS(D'_i)\subseteq f^{-1}[(\omega \times\{n_i\})\cap\Gamma]$;
    \item [(e)] for (e3), if $x=\sum_{b\leq a}x_{t_b}$, $t_0<\ldots<t_a<i$, $t_b\notin\bigcup_{q\leq i}F_q$, then note that 
    $x+x_i=x+x'_i+\min(\tilde{D}_i)\in x+x'_i+\FS(\tilde{D}_i)$ and recall that $(x+x'_i+\FS(\tilde{D}_i))\cap f^{-1}[\{(n_i,n_{t_0})\}]=\emptyset$, thus $x+x_i\notin f^{-1}[\{(n_i,n_{t_0})\}]$
    ((e1) and (e2) are similar);
    \item [(g1)] by items (f) and (g1) applied to $i-1$ and item (c3) applied to $i$, $\FS(\{x_t: t\leq i,t\notin\bigcup_{q\leq i}F_q\})=\FS(\{x_t: t<i,t\notin\bigcup_{q<i}F_q\})\cup(x_i+\FS(\{x_t:\ t<i,t\notin\bigcup_{q< i}F_q\}))\subseteq \FS(X)$;
    \item [(g2)] if $a>0$, $t_{0}<\ldots<t_{a}\leq i$ and $t_b\notin\bigcup_{q\leq i}F_q$, then either $t_a<i$ and $\sum_{b\leq a}x_{t_b}\in x_{t_0}+\FS(D_{t_0})$ (by item (g2) applied to $i-1$) or $t_a=i$ and $\sum_{b\leq a}x_{t_b}\in x_{t_0}+\FS(D_{t_0})$ as $\sum_{b<a}x_{t_b}\in x_{t_0}+\FS(D_{t_0})$ and $x_i\in \FS(D_{i-1})\subseteq \FS(D_{t_0})\setminus \{y\in \FS(D_{t_0}):\ \exists_{j<i}\ \alpha_{D_{t_0}}(y)\cap \alpha_{D_{t_0}}(x_j)\neq\emptyset\}$ by item (c3) for $i$ and items (b1), (f) and (g2) for $i-1$.
    \end{itemize}
Hence, all the requirements are met. This finishes the case of $j_i=0$.

Note that $x_i\neq x_j$ for $i\neq j$ (it follows from items (a2), (c3) and (d3a)). Once the whole recursive construction is completed, define $A=\{x_i: i\notin\bigcup_{q\in\omega} F_q\}$. We need to show two facts:
\begin{itemize}
    \item[(i)] $A$ is infinite;
    \item[(ii)] $f[\FS(A)]\in\cR$.
\end{itemize}
Note that this will finish the proof as item (i) together with $\FS(A)\subseteq \FS(X)$ (by item (g1)) guarantee that $\FS(A)\notin\cH\restriction \FS(X)$.

(i): Since $x_i\neq x_j$ for $i\neq j$, we only need to show that there are infinitely many $t\in\omega$ such that $x_t\in A$. Assume to the contrary that there is $p\in\omega$ such that $x_t\notin A$ for all $t\geq p$. Without loss of generality we may assume that $p$ is minimal with that property. Since $x_p\notin A$, we have that $p\in\bigcup_{q\in\omega} F_q$, hence there is $q$ such that $p\in F_q$. By items (c2) and (d2), we know that $j_q=1$, $q>p$ and, by minimality of $p$, $F_q=\{p,p+1,\ldots q-1\}$. Again, as $x_q\notin A$ (because $q>p$), there should be $r$ such that $q\in F_r=\{k_r,k_r+1,\ldots,r-1\}$ (so $k_r\leq q<r$) and $k_r\geq p$ (by minimality of $p$). However, this is impossible as item (d1) gives us:
$$k_r\in\{u<r: u\notin\bigcup_{w<r} F_w, j_u=0\}\cap \{0,1,\ldots,q\}\subseteq$$
$$\subseteq\{u\leq q: u\notin F_q\}\cap\{u\leq q: j_u=0\}=$$
$$=(\{0,1,\ldots,p-1\}\cup\{q\})\cap\{u\leq q: j_u=0\}\subseteq \{0,1,\ldots,p-1\}$$
(here, if $p=0$ then $\{0,1,\ldots,p-1\}=\emptyset$).

(ii): We have:
\begin{equation*}
    \begin{split}
\FS(A)
=&
\bigcup_{i\in B}(\{x_i\}\cup(x_i+\FS(A\setminus\{0,1,\ldots,x_i\})))
\\ &\cup 
\bigcup_{i\in C}(\{x_i\}\cup(x_i+\FS(A\setminus\{0,1,\ldots,x_i\}))),
    \end{split}
\end{equation*}
where $B=\{i\in\omega: i\notin\bigcup_{q\in\omega}F_q, j_i=0\}$ and $C=\{i\in\omega: i\notin\bigcup_{q\in\omega}F_q, j_i=1\}$. 

At first we will show that $f[\bigcup_{i\in C}(\{x_i\}\cup(x_i+\FS(A\setminus\{0,1,\ldots,x_i\})))]\in\cR$. Note that $f(x_i)=(n_i,n_{k_i})$ (by item (d3a)) and $f[x_i+\FS(A\setminus\{0,1,\ldots,x_i\})]\subseteq f[x_i+\FS(D_i)]=\{(n_i,n_{k_i})\}$ for each $i\in C$ (by items (d4) and (g2)). Moreover, the sequence $(n_i)_{i\in \omega}$ is injective (by item (a1)). Hence, $f[\bigcup_{i\in C}(\{x_i\}\cup(x_i+\FS(A\setminus\{0,1,\ldots,x_i\})))]\in\cR$, as any set intersecting each $(\{n\}\times\omega)\cap\Gamma$ on at most one point belongs to $\cR$.

Now we will show that $f[\bigcup_{i\in B}(\{x_i\}\cup(x_i+\FS(A\setminus\{0,1,\ldots,x_i\})))]\in\cR$. By items (c3), (c4) and (g2), $f[\{x_i\}\cup(x_i+\FS(A\setminus\{0,1,\ldots,x_i\}))]\subseteq f[\{x_i\}\cup(x_i+\FS(D_i))]\subseteq (\omega\times\{n_i\})\cap\Gamma$ for all $i\in B$. Note that $\bigcup_{i\in B} f[\{x_i\}]\in \cR$ (from (a1), as each set intersecting each $(\omega\times\{n\})\cap\Gamma$ on at most one point belongs to $\cR$). Suppose that $Z^2\cap \Gamma\subseteq \bigcup_{i\in B} f[x_i+\FS(A\setminus\{0,1,\ldots,x_i\})]$ for some $Z\in[\omega]^\omega$. 

Firstly, we will show that $|Z\setminus \{n_i:i\in B\}|\leq 1$. Suppose that there are $z,w\in Z\setminus \{n_i:i\in B\}$ such that $z>w$. Then there is $i\in B$ such that $(z,w)\in f[x_i+\FS(A\setminus\{0,1,\ldots,x_i\})]$, hence $x_i+\FS(A\setminus\{0,1,\ldots,x_i\})\subseteq f^{-1}[\{(z,w)\}]$. But by (c4) and (g2) we have $x_i+\FS(A\setminus\{0,1,\ldots,x_i\})\subseteq x_i+\FS(D_i)\subseteq f^{-1}[(\omega\times\{n_i\})\cap\Gamma]$. So $(z,w)\in (\omega\times\{n_i\})\cap\Gamma$, i.e., $w=n_i$. A contradiction. 

By the previous paragraph, since $Z$ is infinite, there are $i,j\in B$ such that $j<i$ and $n_i,n_j\in Z$. We will show that $(n_i,n_j)\notin \bigcup_{k\in B} f[x_k+\FS(A\setminus\{0,1,\ldots,x_k\})]$. This will contradict $Z^2\cap \Gamma\subseteq \bigcup_{k\in B} f[x_k+\FS(A\setminus\{0,1,\ldots,x_k\})]$ and finish the proof.

Suppose that $(n_i,n_j)\in \bigcup_{k\in B} f[x_k+\FS(A\setminus\{0,1,\ldots,x_k\})]$. From (c4) and (g2), for every $k\neq j$, $k\in B$ we have $f[x_k+\FS(A\setminus\{0,1,\ldots,x_k\})]\subseteq f[x_k+\FS(D_k)] \subseteq(\omega\times\{n_k\})\cap\Gamma$, so $(n_i,n_j)\notin f[x_k+\FS(A\setminus\{0,1,\ldots,x_k\})]$. Hence, $(n_i,n_j)\in f[x_j+\FS(A\setminus\{0,1,\ldots,x_j\})]$. Let $y\in x_j+\FS(A\setminus\{0,1,\ldots,x_j\})$ be such that $f(y)=(n_i,n_j)$. Then $y=x_j+x_{s_0}+\ldots +x_{s_p}$ for some $j<s_0<\ldots <s_p$. We have five cases:
\begin{itemize}
    \item If $s_p<i$, then from item (a2) we have $f(y)\in [\{0,\ldots,n_i-1\}]^2$. A contradiction.
    \item If $s_p=i$, then $y=(x_j+\ldots+x_{s_{p-1}})+x_{i}$ and from item (e3) (applied to $x=(x_j+\ldots+x_{s_{p-1}})$) we get $f(y)\neq (n_i,n_j)$,   
    a contradiction.
    \item If there exists $k<p$ such that $s_k=i$, then $y=(x_j+\ldots+x_{s_{k-1}})+x_{i}+(x_{s_{k+1}}+\ldots+x_{s_p})\in (x_j+\ldots+x_{s_{k-1}})+x_{i}+\FS(D_i)$ by item (g2) and from item (e1) (applied to $x=(x_j+\ldots+x_{s_{k-1}})$) we get a contradiction.
    \item If there exists $k\leq p$ such that $s_{k-1}<i<s_k$, then $y=(x_j+\ldots+x_{s_{k-1}})+(x_{s_k}+\ldots+x_{s_p})\in (x_j+\ldots+x_{s_{k-1}})+\FS(D_i)$ by item (g2) and from item (e2) we get a contradiction.
    \item If $i<s_0$, then $y=x_j+(x_{s_0}+\ldots+x_{s_p})\in x_j+\FS(D_i)$ by item (g2) and from item (e2) we get a contradiction.
\end{itemize}

Thus, $f[\FS(A)]\in\cR$ and the proof is finished.
\end{proof}


\bibliographystyle{amsplain}
\bibliography{FKKK}

\end{document}